\documentclass[hidelinks,onefignum,onetabnum]{siamart250211}

\usepackage{lipsum}
\usepackage{amsfonts}
\usepackage{graphicx}
\usepackage{epstopdf}
\usepackage{amsmath,amstext,amssymb}
\usepackage{mathrsfs}
\usepackage{booktabs}
\usepackage{stmaryrd}
\usepackage{algorithm}
\usepackage{algpseudocode}

\ifpdf
\DeclareGraphicsExtensions{.eps,.pdf,.png,.jpg}
\else
\DeclareGraphicsExtensions{.eps}
\fi

\newtheorem{example}{Example}
\newsiamremark{remark}{Remark}
\newsiamremark{assumption}{Assumption}
\crefname{assumption}{Assumption}{Assumption}
\newsiamthm{claim}{Claim}
\newsiamremark{fact}{Fact}
\crefname{fact}{Fact}{Facts}

\headers{Adaptive PIRaNNs with a PoU for solving PDEs}{Ran Bi and Weibing Deng}

\title{Adaptive Physics-informed Randomized Neural Networks with a Partition of Unity for Solving PDEs\thanks{Submitted to the editors DATE.
		\funding{This work was 
			partially supported by the NSF of China grant  12171237, and by the
			Ministry of Science and Technology of China grant 2020YFA0713800.}}}

\author{Ran Bi\thanks{School of Mathematics, Nanjing University, Nanjing 210093, People’s Republic of China
		{(\email{ranbi@smail.nju.edu.cn}, \email{wbdeng@nju.edu.cn}).}}\and Weibing Deng\footnotemark[2]
}

\usepackage{amsopn}

\ifpdf
\hypersetup{
  pdftitle={XI-PINN},
  pdfauthor={Ran Bi and Weibing Deng}
}
\fi

\definecolor{gold}{rgb}{0,0,1}

\begin{document}

\maketitle

\begin{abstract}
This paper establishes an approximation theory and develop an adaptive computational framework for randomized neural networks (RaNNs). For RaNNs of the form
$
\sum_{i=1}^{N} W_i \sigma(A_i\cdot x+B_i),
$
with hidden parameters uniformly sampled from a bounded set of scale $M$, we show that the choice
$
M \asymp N^{p/[2((p-1)(d+1)+p\eta)]}
$
yields an expected $W^{k,p}$-approximation error of order
$
\mathcal{O}\left(N^{-p\eta/[2((p-1)(d+1)+p\eta)]}\right),
$
where $\eta$ is associated with the regularity of the target function in a generalized Barron spectral space. This theoretical result demonstrates that lower regularity requires a larger sampling range for the hidden parameters. Motivated by the relationship between $M$ and $\eta$, we introduce an adaptive physics-informed RaNN (PIRaNN) method that which couples parameter sampling with an adaptive partition of unity via local affine scaling. Residual-based a posteriori error indicators and Dörfler marking are used to drive local refinement. Numerical experiments—including the 1D viscous Burgers' equation and 2D/3D problems with localized peaks and L-shaped corner singularities—demonstrate that our method preferentially refines regions exhibiting large gradients and singularities. Compared to standard non-adaptive RaNNs, the adaptive PIRaNN effectively captures complex local features with significantly enhanced accuracy and efficiency.
\end{abstract}

\begin{keywords}
	Randomized neural networks, \and Approximation theory, \and Adaptive algorithm
\end{keywords}

\begin{MSCcodes}
	68T07, \and 65N50, \and 41A25
\end{MSCcodes}

\section{Introduction}
In recent years, neural networks have garnered significant attention as a promising tool for solving partial differential equations (PDEs). {Unlike traditional numerical methods—such as the finite element and finite difference methods—which rely on spatial discretization over computational meshes, neural network-based approaches seek to approximate PDE solutions directly by training networks with strong universal approximation capabilities, thereby eliminating the need for mesh generation. Furthermore, these methods can effectively mitigate the curse of dimensionality often encountered in conventional mesh-based schemes.} As a result, a variety of neural network frameworks have been developed, including Physics-Informed Neural Networks (PINNs) \cite{raissi2019physics}, DeepONet \cite{lu2021learning}, and the Fourier Neural Operator (FNO) \cite{li2020fourier}, demonstrating considerable potential in overcoming the limitations of classical numerical techniques.

Theoretical research has extensively demonstrated the powerful function approximation capabilities of neural networks. For instance, constructive proofs of universal approximation theorems for deep neural networks under various activation functions have been provided in \cite{guhring2021approximation,de2021approximation,lu2021deep}. For shallow neural networks with simpler architectures, Ellacott \cite{ellacott1994aspects} showed that any function defined on a compact set can be approximated provided the activation function is non-polynomial. In a seminal work, Barron \cite{barron2002universal} employed statistical arguments to derive an approximation rate of $\mathcal{O}\left(N^{-1/2}\right)$ for shallow networks with a sigmoidal activation function, where $N$ denotes the number of neurons. Notably, this rate is independent of the input dimension $d$, offering theoretical support for circumventing the curse of dimensionality. This result was later refined by Klusowski and Barron \cite{klusowski2016uniform}, who improved the convergence rate to $\mathcal{O}\left(N^{-\frac12 - \frac{1}{d+1}}\right)$ using stratified sampling, thereby introducing dimension dependence. Subsequently, E et al. \cite{ma2022barron,ma2018priori} formalized the notion of Barron spaces, which characterize functions that can be efficiently approximated by shallow ReLU networks. Further extending this direction, Xu et al. \cite{siegel2020approximation,siegel2022high,xu2020finite} introduced Barron spectral spaces and derived approximation rates for shallow networks with more general activations. Most recently, Siegel and Xu \cite{siegel2024sharp} established sharp approximation bounds for shallow neural networks with the $\mathrm{ReLU}^k$ activation function, achieving an order of $\mathcal{O}\left(N^{-\frac12 - \frac{pk+1}{pd}}\right)$ in the $L^p$ norm. These theoretical advances underscore the significant potential of neural network-based approximations, particularly in high-dimensional settings.

{
	For PDE computations, optimizing all network parameters usually leads to a high-dimensional nonconvex problem. Numerous techniques have been proposed to mitigate this difficulty, including trainable activation scalings \cite{jagtap2020adaptive}, adaptive loss weighting \cite{wang2021understanding,wang2022and}, higher-order optimization methods \cite{wang2026gradient,rathore2024challenges}, and extended-variable formulations \cite{hu2022discontinuity,bi2025extended}. Randomized neural networks (RaNNs) take a different route: the hidden weights and biases are sampled once from a prescribed probability distribution and then held fixed, while only the output coefficients are trained. For a linear PDE, the resulting physics-informed problem is a linear least-squares problem. }This architecture is closely related to the extreme learning machine (ELM) \cite{huang2006extreme}. Its PDE variants include the physics-informed ELM \cite{dwivedi2020physics}, the domain-decomposed local ELM \cite{dong2021local}, and PoU-based random-feature methods \cite{chen2022bridging}. De et al. \cite{de2026least} imposed boundary conditions as hard equality constraints in the ELM least-squares formulation. Approximation and generalization properties of randomized architectures have also been studied in \cite{gonon2023random,guhring2021approximation,de2025approximation,neufeld2023universal,liu2025integral,zhang2024transferable}.

Since the internal parameters of RaNNs are fixed and sampled from a predetermined distribution, classical approximation theorems for general shallow neural networks are not directly applicable. In this work, motivated by the analysis in \cite{siegel2020approximation}, where approximation rates were established for shallow networks with polynomially decaying non-sigmoidal activations, we extend the definition of the Barron spectral space \cite{siegel2022high,siegel2020approximation} to a generalized Barron spectral space. We prove that functions belonging to this space can be approximated by RaNNs whose parameters are generated via uniform sampling from a bounded set, and we establish a corresponding convergence rate $\mathcal{O}\left( N^{-\frac{p \eta}{2[(p-1)(d+1) + p\eta]}}\right) $ in the Sobolev space $W^{k,p}$ for $2 \le p < \infty$. Our theoretical analysis demonstrates that the parameter $M$, governing the size of the sampling set for the inner-layer RaNN parameters, should satisfy $M \asymp N^{\frac{p}{2[(p-1)(d+1) + p\eta]}}$. Here, $\eta$ is associated with the regularity index of the generalized Barron spectral space, and $N$ represents the number of neurons (see Theorem~\ref{Theorem 1}). This scaling exposes an intrinsic dependence between the required sampling range and the regularity of the target function: for a fixed number of neurons, less smooth functions require a larger range to attain better approximation.

Leveraging the established relationship between the parameter sampling range and the target function's smoothness, we construct a PoU over the physical domain of the PDEs. The parameters of the RaNNs are first generated on a reference element and are then mapped to each physical subdomain via an affine transformation. This construction effectively couples the range of the sampled parameters to the local size of the PoU elements. In particular, a finer partition (smaller element size) corresponds to a locally enlarged effective sampling range for the RaNN parameters within that element. Motivated by strategies widely used in adaptive finite element methods \cite{dorfler1996convergent,cascon2008quasi,karakashian2007convergence}, we employ residual-based a posteriori error estimates to identify which elements require refinement. This leads to an adaptive  physics-informed randomized neural network (PIRaNN) framework, where the network parameter distribution and the physical discretization are co-adapted to efficiently resolve local solution features.
{
	The novelty and main contributions of this work are summarized as follows:
	\begin{itemize}
		\item[1.] We establish $W^{k,p}$ $(2\le p <\infty)$ approximation estimates for RaNNs whose hidden weights and biases are sampled uniformly from a bounded parameter set. The estimates separate the parameter-truncation error from the stochastic sampling error and analyze their dependence on the size of the sampling set, network size, spatial dimension, and the generalized Barron regularity of the target function. We further derive an improved convergence rate for stratified uniform sampling. 
		
		\item[2.] Guided by the established scaling relationship between the required sampling range $M$ and the optimal approximation error, we introduce an affine PoU construction that maps random features generated on a common reference element to local physical subdomains. Consequently, local refinement enlarges the effective hidden‑parameter range in physical coordinates, thus providing a theory‑guided mechanism to enhance the representation of localized solution features without requiring subdomain‑specific sampling distributions.
		
		\item[3.] We develop a residual-based adaptive PIRaNN algorithm using a posteriori error indicators and D\"orfler marking. The numerical experiments show that the method preferentially refines regions containing sharp localized peaks, corner singularities, and steep solution fronts, thereby enabling the RaNN to effectively capture local features.
	\end{itemize}
}
The remainder of this paper is organized as follows. Section~\ref{section 2} introduces the basic frameworks of RaNNs and PIRaNNs, followed by the definition of the proposed generalized Barron spectral space. Section~\ref{section 3} establishes a convergence analysis in Sobolev spaces for approximating functions in this generalized Barron space using RaNNs. Section~\ref{section 4} details the proposed adaptive PIRaNN algorithm. In Section~\ref{section 5}, comprehensive numerical experiments are presented to validate the theoretical analysis of Section~\ref{section 3} and to demonstrate the robust approximation capabilities of the adaptive PIRaNN method. Finally, conclusions are presented in Section~\ref{section 6}.

\section{Preliminaries}\label{section 2}
In this section, we first briefly introduce the framework of RaNNs and the PIRaNN method for solving PDEs. Then, to facilitate the subsequent theoretical analysis, we extend the definitions of the Barron spectral norms and the corresponding function spaces, motivated by the ideas in \cite{xu2020finite,siegel2020approximation}.

\subsection{Randomized neural networks}
We consider a single-hidden-layer feedforward network (SLFN) \cite{huang2006extreme} with randomly generated hidden weights. More specifically, given a probability distribution $\rho$, the random function can be defined as
\begin{equation}\label{SLFN}
	U_W^{A, B}(x) := \sum_{i=1}^{N} W_i \sigma\left( A_i \cdot x + B_i\right), \quad x\in \mathbb{R}^d,
\end{equation}
where $(A_1, B_1), ..., (A_N, B_N)$ are i.i.d. $\mathbb{R}^{d+1}$-valued random vectors drawn from $\rho$, $\sigma:\mathbb{R} \to \mathbb{R}$ is a fixed activation function and {the weights $W_1, ..., W_N \in \mathbb{C}$} can be chosen freely such that $U_W^{A, B}$ is a good approximation of the target function $u$. In fact, the SLFN can be interpreted as generating a family of basis functions $\left\lbrace \phi_i(x) = \sigma\left( A_i \cdot x + B_i\right), (A_i, B_i) \sim \rho \right\rbrace_{i=1}^N$ according to $\rho$. This family of functions spans a linear space $\mathcal{H}_{N} = \mathrm{span}\left\lbrace \phi_1, \phi_2, ..., \phi_N\right\rbrace$ whose properties critically depend on the choice of the probability distribution $\rho$. We note that one can also add an output bias $W_0$ to (\ref{SLFN}).
\subsection{PIRaNNs for solving PDEs}
The PIRaNN method combines the structure of RaNNs~\eqref{SLFN} with physics-based residual minimization principles. This approach circumvents the mesh dependency of traditional numerical methods and the complex nonconvex optimization challenges encountered in deep feedforward neural network training.

Consider a general PDE defined on a domain $\Omega \subset \mathbb{R}^d$ with boundary $\partial \Omega$ and $d \ge 1$:
\begin{equation} \label{PDEs}
	\left\lbrace 
	\begin{aligned}
		\mathcal{D}\left[ u\right](x) &= f(x), \quad &&\mathrm{in}\;\Omega,\\
		\mathcal{B}\left[ u\right](x) &= g(x), \quad &&\mathrm{on}\;\partial \Omega,
	\end{aligned}
	\right. 
\end{equation}
where $\mathcal{D}$ is a potentially linear (or nonlinear) differential operator and $\mathcal{B}$ represents the boundary operator. The PIRaNN approach seeks the optimal parameters $W^*=\left\lbrace W_i^*\right\rbrace_{i=1}^N$ such that $U_{W^*}^{A,B}$ approximates the solution of (\ref{PDEs}). This is achieved using a collocation method based on residual minimization of (\ref{PDEs}). Let $K_\Omega$ and $K_{\partial \Omega}$ denote the numbers of collocation points in $\Omega$ and $\partial \Omega$, respectively. We define two sets of collocation points:
\begin{equation*}
	C_\Omega = \left\lbrace x_{\Omega, j} \right\rbrace_{j=1}^{K_\Omega} \subset \Omega, \quad C_{\partial \Omega} = \left\lbrace x_{\partial\Omega, j} \right\rbrace_{j=1}^{K_{\partial \Omega}} \subset \partial \Omega.
\end{equation*}
By enforcing (\ref{PDEs}) at each collocation point, we define the corresponding loss function as follows:
\begin{equation*}
	\begin{aligned}
		\mathcal{L}(W) &= \sum_{j=1}^{K_\Omega} \lambda_{\Omega,j}^2 \left| \mathcal{D}[U_W^{A,B}](x_{\Omega, j}) - f(x_{\Omega, j})\right|^2 \\
		&\quad + \sum_{j=1}^{K_{\partial\Omega}} \lambda_{\partial\Omega,j}^2 \left| \mathcal{B}[U_W^{A,B}](x_{\partial\Omega, j}) - g(x_{\partial\Omega, j})\right|^2,
	\end{aligned}
\end{equation*}
where $\left\lbrace \lambda_{\Omega, j} \right\rbrace$ and $\left\lbrace \lambda_{\partial \Omega, j} \right\rbrace$ are the corresponding weight parameters. 

When $\mathcal{D}$ is a linear operator and the basis functions $\left\lbrace \phi_i \right\rbrace_{i=1}^N$ are linearly independent, we can obtain the solution $U_{W^*}^{A,B} = \sum_{i=1}^N W^*_i \sigma(A_i \cdot x + B_i)$ by
\begin{equation} \label{eq 2.3}
	W^* = H^\dag T,
\end{equation}
where
\begin{equation*}
	H = 
	\begin{bmatrix}
		\left[ \lambda_{\Omega, j} \mathcal{D}\left[ \sigma \left( A_i \cdot x_{\Omega, j} + B_i\right) \right] \right]_{K_\Omega \times N} \\
		\left[ \lambda_{\partial\Omega, j}\mathcal{B}\left[ \sigma \left( A_i \cdot x_{\partial \Omega, j} + B_i\right) \right] \right]_{K_{\partial \Omega} \times N}
	\end{bmatrix},
	\quad
	T =
	\begin{bmatrix}
		\left[\lambda_{\Omega, j} f(x_{\Omega, j}) \right]_{K_\Omega \times 1} \\
		\left[\lambda_{\partial\Omega, j} g(x_{\partial \Omega, j}) \right]_{K_{\partial \Omega} \times 1}
	\end{bmatrix},
\end{equation*}
and $H^\dag$ is the Moore-Penrose generalized inverse of $H$. Similarly, when $\mathcal{D}$ is a nonlinear differential operator, one can formulate a nonlinear least-squares problem (see \cite{dong2021local}).

\subsection{Generalized Barron spectral spaces}
For a real-valued function $f$ defined on a bounded domain $\Omega \subset \mathbb{R}^d$, its approximation by single-hidden-layer neural networks was first characterized by Barron using a Fourier representation of $f$ under suitable smoothness assumptions, thereby establishing a theoretical approximation rate \cite{barron2002universal,klusowski2016risk}. Subsequently, the Barron spectral space was defined by Xu et al. \cite{xu2020finite,siegel2022high} to characterize the regularity (or smoothness) of functions. Consider all extensions $f_e: \mathbb{R}^d \to \mathbb{R}$ and define the Barron spectral norm for $s \ge 1$:
\begin{equation} \label{Barron spectral norm}
	\|f\|_{\mathcal{B}_s(\Omega)} = \mathop{\inf}_{f_e|_\Omega = f} \int_{\mathbb{R}^d} \left( 1 + |\xi|\right)^s|\widehat{f_e}(\xi)| \mathrm{d}\xi, 
\end{equation}
and the Barron spectral space
\begin{equation*}
	\mathcal{B}_{s}(\Omega) = \left\lbrace f: \Omega \to \mathbb{R} \big|  \|f\|_{\mathcal{B}_s(\Omega)} < \infty \right\rbrace.
\end{equation*}
The Barron spectral norm \eqref{Barron spectral norm} can be interpreted as a weighted $L^1$ norm of the Fourier transform $\widehat{f_e}$ in the frequency domain, which effectively describes the decay behavior of $\widehat{f_e}$ and hence the smoothness of the original function $f$. To support the subsequent analysis of the approximation properties of RaNNs, we now introduce a generalized version of the Barron spectral norm. For $k \ge 0$, $s \ge 1$, $p \in [1, \infty)$ and a multi-index $\alpha=(\alpha_1, \alpha_2, \dots, \alpha_d)$ with $\alpha_i \ge 0$ and $|\alpha| = \sum_{i=1}^d \alpha_i$, we define
\begin{equation}\label{Generalized spectral norm}
	\|f\|_{\mathcal{B}_s^{k,p}(\Omega)} = \mathop{\inf}_{f_e|_\Omega = f} \left( \sum_{|\alpha| \le k} \int_{\mathbb{R}^d} \left( \left( 1 + |\xi| \right)^s \left| \partial^\alpha_\xi \widehat{f_e}(\xi) \right| \right)^p \mathrm{d}\xi \right)^{1/p}.
\end{equation}
The corresponding generalized Barron spectral space is
{
	\begin{equation*}
		\mathcal{B}_s^{k,p}(\Omega) = \left\lbrace f: \Omega \to \mathbb{C} \big|  \|f\|_{\mathcal{B}_s^{k,p}(\Omega)} < \infty \right\rbrace.
	\end{equation*}
}For simplicity, we write $\mathcal{B}^p_s(\Omega) = \mathcal{B}^{0,p}_s(\Omega)$ when $k=0$. It can be observed that the generalized Barron spectral norm corresponds to the weighted Sobolev $W^{k,p}$ norm of $\widehat{f_e}$ in the frequency domain. Similarly to the Barron spectral norm, the norm defined in (\ref{Generalized spectral norm}) can characterize the smoothness of the function $f$. The key difference lies in its dependence on the derivatives of $\widehat{f_e}$. From the properties of the Fourier transform \cite{folland1999real}, we recall that when $x^\alpha f_e \in L^1(\mathbb{R}^d)$, the identity $\partial_\xi^\alpha\widehat{f_e} = \widehat{(-i x)^\alpha f_e}$ holds. This implies that the extension $f_e$ must satisfy more stringent decay conditions; i.e., $x^\alpha f_e \in L^1(\mathbb{R}^d)$.

\section{Approximation properties for RaNNs in Sobolev norms} \label{section 3}
Since our objective is to solve PDEs using RaNNs, it becomes necessary to characterize the approximation of {the unknown solution $u: \Omega \to \mathbb{C}$} and its derivatives by RaNNs---specifically, their approximation properties in Sobolev spaces. This requires establishing a theoretical connection between the Sobolev space and our defined generalized Barron spectral space. For simplicity, in this section, we use the shorthand notation $A \lesssim B$ for the inequality $A \le CB$, where $C > 0$ is a generic constant independent of the number of neurons in the RaNNs.

Our analysis is based on the Fourier representation of the solution function $u$, and we require the activation function $\sigma$ to have localized properties (in contrast to commonly used globally defined activation functions). We thus need the following assumptions:

\begin{assumption} \label{assumption 1}
	{Let $u_e: \mathbb{R}^d \to \mathbb{C}$ be a global extension of $u$} such that $u_e \in L^1(\mathbb{R}^d)$ and its Fourier transform $\widehat{u_e} \in L^1(\mathbb{R}^d)$.
\end{assumption}

\begin{assumption} \label{assumption 2}
	Let {$k \ge 0$} be an integer and $r > 1$ be a real number. The activation function {$\sigma \in W^{k, \infty}(\mathbb{R})$} is assumed to be nonzero. Furthermore, for every integer {$\nu$} with {$0 \le \nu \le k$}, {$\sigma^{(\nu)}$} satisfies the polynomial decay condition
	{
		\begin{equation}
			|\sigma^{(\nu)}(x)| \le C_r (1 + |x|)^{-r},
		\end{equation}
	}
	where $C_r > 0$ is a constant depending on $r$, $\sigma$ and $\nu$.
\end{assumption}

\begin{remark} \label{remark 1}
	A variety of activation functions satisfy Assumption~\ref{assumption 2}, such as the Gaussian kernel function and the function $\tanh(x + 0.5) - \tanh(x - 0.5)$ constructed as the difference of two shifted $\tanh$ activations.
\end{remark}

Under the above assumptions, Siegel and Xu \cite{siegel2020approximation} established an approximation result for SLFNs. Specifically, let $\Omega \subset \mathbb{R}^d$ be a bounded domain and $\varepsilon > 0$. Consider the class of functions representable by a network with $N$ hidden neurons, 
\begin{equation*}
	\Sigma_N^d(\sigma) = \left\lbrace \sum_{i=1}^N w_i \sigma(\xi_i \cdot x + s_i) \big|  \xi_i \in \mathbb{R}^d, s_i,w_i \in \mathbb{R} \right\rbrace.
\end{equation*}
If the activation function $\sigma$ satisfies Assumption~\ref{assumption 2}, then for any function $u$ belonging to the space $\mathcal{B}_{k+1+\varepsilon}^1(\Omega)$ and fulfilling Assumption~\ref{assumption 1}, the following error estimate holds:
\begin{equation} \label{xu approx thm}
	\inf_{u_N \in \Sigma_N^d(\sigma)}  \|u-u_N\|_{H^k(\Omega)} \lesssim |\Omega|^{\frac{1}{2}} N^{-\frac{1}{2} - \frac{\eta}{(2+\eta)(d+1)}} \|u\|_{\mathcal{B}_{k+1+\varepsilon}^1(\Omega)},
\end{equation}
where $\eta = \min(r-1, \varepsilon)$.
%

However, this approximation result cannot be directly applied to the RaNNs defined in (\ref{SLFN}). Although RaNNs are structurally similar to single-hidden-layer neural networks, a crucial difference lies in the treatment of internal parameters: in RaNNs, the weights and biases of the hidden layer are randomly sampled from a prescribed probability distribution $\rho$ and remain fixed during training, whereas in conventional networks these parameters are optimized. Moreover, the proof of \eqref{xu approx thm} relies on the existence of a probability distribution over parameters that is linked to the Barron spectral norm $\|u\|_{\mathcal{B}_s^1}$. In practice, however, the smoothness properties of the target function $u$ are often unknown, making it difficult to design such a distribution a priori. In the RaNN setting, the most commonly used sampling distributions are uniform and Gaussian. In what follows, we present an approximation theorem for RaNNs based on uniform sampling.

Under Assumption~\ref{assumption 2}, the activation function $\sigma$ belongs to $L^1(\mathbb{R})$. Consequently, its Fourier transform $\widehat{\sigma}$ is well-defined and continuous, and there exists some $a \neq 0$ such that $\widehat{\sigma}(a) \neq 0$. Without loss of generality, we assume $a > 0$ and $a = \mathcal{O}(1)$. By a change of variables, we obtain the representation
\begin{equation} \label{eq 3.2}
	e^{ia\xi \cdot x} = \dfrac{1}{\widehat{\sigma}(a)} \int_\mathbb{R} \sigma(\xi \cdot x + s) e^{-ias} \mathrm{d}s.
\end{equation}
Applying the Fourier inversion theorem together with Assumption~\ref{assumption 1}, the following identity holds almost everywhere in $\Omega$:

\begin{equation} \label{eq 3.3}
	\begin{aligned}
		u(x) = u_e(x)
		= \dfrac{1}{(2\pi)^d} \int_{\mathbb{R}^d} e^{i\xi \cdot x} \widehat{u_e}(\xi) \mathrm{d}\xi.
	\end{aligned}
\end{equation}

To quantify the approximation error of RaNNs, we first establish several auxiliary lemmas.
\begin{lemma} \label{lemma 1}
	Let $\Omega \subset \mathbb{R}^d$ be a bounded domain and let the activation function $\sigma$ satisfy Assumption~\ref{assumption 2}. Then for any $l \ge 1$, the following integral estimate holds:
	\begin{equation*}
		\int_{\mathbb{R}}\left| \sigma^{({\nu})} \left( \xi \cdot x + s\right) \right|^l \mathrm{d}s \le C R \left( 1 + \left| a\xi \right|\right),
	\end{equation*}
	where $R = \max_{x\in\Omega} |x| \ge 1$ and $C > 0$ is a constant.
\end{lemma}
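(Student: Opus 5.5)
The bound should follow from translation invariance of Lebesgue measure on $\mathbb{R}$, together with the decay in Assumption~\ref{assumption 2}. The plan is to show that the left-hand side is in fact a constant independent of $\xi$ and $x$, and then to absorb the factor $R(1+|a\xi|)$ trivially.

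\textbf{Step 1 (change of variables).} Fix $x\in\Omega$ and $\xi\in\mathbb{R}^d$, and substitute $t=\xi\cdot x+s$. This is a translation in $s$ with unit Jacobian, so
\begin{equation*}
\int_{\mathbb{R}}\left|\sigma^{(\nu)}(\xi\cdot x+s)\right|^l\,\mathrm{d}s=\int_{\mathbb{R}}\left|\sigma^{(\nu)}(t)\right|^l\,\mathrm{d}t .
\end{equation*}
The right-hand side no longer depends on $\xi$ or $x$.

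\textbf{Step 2 (finiteness).} By Assumption~\ref{assumption 2}, $|\sigma^{(\nu)}(t)|\le C_r(1+|t|)^{-r}$ for every $0\le\nu\le k$. Since $l\ge1$ and $r>1$, we have $rl>1$, and therefore
\begin{equation*}
\int_{\mathbb{R}}|\sigma^{(\nu)}(t)|^l\,\mathrm{d}t\le C_r^l\int_{\mathbb{R}}(1+|t|)^{-rl}\,\mathrm{d}t=\frac{2C_r^l}{rl-1}=:C_0<\infty .
\end{equation*}
To obtain a constant uniform in $l$, one could alternatively use the bound $\|\sigma^{(\nu)}\|_{L^\infty}\le C_r$, which gives $|\sigma^{(\nu)}|^l\le C_r^{l-1}|\sigma^{(\nu)}|$. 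Either way, $C_0$ depends only on $\sigma$, $r$, $\nu$ and $l$, and can be made uniform over the finitely many $\nu\le k$ by taking a maximum.

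\textbf{Step 3 (conclusion).} Since $R\ge1$ and $1+|a\xi|\ge1$, we get $C_0\le C_0R(1+|a\xi|)$, which is the claimed estimate with $C=C_0$.

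There is no real obstacle in this argument. The only point to verify is that the integration variable is $s$ over all of $\mathbb{R}$, which is what makes translation invariance available. If the intended statement instead integrates over a bounded window of $s$ whose length scales like $R(1+|a\xi|)$, as happens later when the parameters are truncated, the same bound follows even more directly: bound the integrand by $\|\sigma^{(\nu)}\|_{L^\infty}^l$ and multiply by the length of the window.
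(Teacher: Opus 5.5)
Your proof is correct, and it takes a genuinely different and sharper route than the paper.

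\textbf{The paper's route.} The paper does not use translation invariance. It bounds the integrand pointwise using $|\xi\cdot x+s|\ge\max(0,|s|-R|\xi|)$. It then bounds $h^l$ by $1$ on the window $|s|\le R|\xi|$ and integrates the shifted tail $(1+|s|-R|\xi|)^{-lr}$ outside it. This gives $C_r^l\bigl(2R|\xi|+2/(lr-1)\bigr)$, which is then dominated by $CR(1+|a\xi|)$ using $R\ge1$ and $a>0$.

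\textbf{Your route.} The linear growth in $|\xi|$ is an artifact of bounding the integrand by its maximum on a window of length $2R|\xi|$. Your substitution $t=\xi\cdot x+s$ removes it. It shows the integral equals $\|\sigma^{(\nu)}\|_{L^l(\mathbb{R})}^l\le 2C_r^l/(rl-1)$, a constant independent of $\xi$, $x$, $R$ and $a$, valid for every $x\in\mathbb{R}^d$.

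\textbf{Effect downstream.} The stronger form simplifies the later uses in Theorem~\ref{Theorem 1} and Appendix~\ref{appendix B}. There, the extra factor $(1+|a\xi|)$ produced by the lemma has to be absorbed into the Barron weight via $1<pm$. With your bound that absorption is unnecessary. The final rates are unchanged, since they are driven by $\pi_M^{1-p}$.

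\textbf{What the paper's device buys.} The pointwise inequality $|\xi\cdot x+s|\ge|s|-R|\xi|$ works verbatim on sets that are not translation invariant. An example is $\{|s|>M\}$ in Step~2 of Lemma~\ref{lemma 2}, where one needs $|\xi\cdot x+s|\ge M/2$. For the full-line integral in this lemma it is not needed.

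\textbf{Minor point.} Your aside about getting a constant uniform in $l$ from $|\sigma^{(\nu)}|^l\le C_r^{l-1}|\sigma^{(\nu)}|$ does not deliver that, because $C_r^{l-1}$ still depends on $l$. Nothing requires uniformity in $l$, so this is harmless.
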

\begin{proof}
	By the triangle inequality and the boundedness of $\Omega$, we have
	\begin{equation*}
		\left| \xi \cdot x + s\right| \ge \max \left( 0, |s| - R|\xi|\right).
	\end{equation*}
	Combining this with Assumption~\ref{assumption 2} yields
	\begin{equation*}
		\left|\sigma^{({\nu})} \left( \xi \cdot x + s\right) \right| \le C_r \left( 1 + \max \left( 0, |s| - R|\xi|\right) \right)^{-r}.
	\end{equation*}
	For simplicity, we denote $
	h(\xi ,s) = \left( 1 + \max \left( 0, |s| - R|\xi|\right) \right)^{-r}.$
	The decay rate of the function $h$ is sufficiently fast for it to be integrable in $s$. For $l \ge 1$, the following estimate holds:
	\begin{equation*}
		\begin{aligned}
			\int_{\mathbb{R}} \left|\sigma^{({\nu})} \left( \xi \cdot x + s\right) \right|^l \mathrm{d} s
			&\le C_r^l \int_{\mathbb{R}} h(\xi, s)^l \mathrm{d} s \\
			&= C_r^l \left( \int_{|s| \le R|\xi|} 1 \, \mathrm{d}s + \int_{|s| > R|\xi|} \left( 1 + |s| - R|\xi|\right)^{-lr} \mathrm{d}s \right) \\
			&\le C_r^l \left( 2 R |\xi| + \frac{2}{lr-1} \right) \le C(l, r, a) R \left( 1 + |a\xi|\right).
		\end{aligned}
	\end{equation*}
	This completes the proof.
\end{proof}

{
	Define the bounded parameter sets
	\begin{equation*}
		G_\xi(M) := \left\lbrace \xi \in \mathbb{R}^d \big| |\xi| \le \dfrac{M}{2 R}\right\rbrace,\quad
		G_s(M) := \left\lbrace s \in \mathbb{R} \big| |s| \le M\right\rbrace,
	\end{equation*}
	where $M>0$ is a constant to be chosen later.} In the following, we assume $R = \mathcal{O}(1)$. By translating $\Omega$ if necessary, we may assume that the origin is located at the center of $\Omega$. Consequently, $R$ is comparable to the radius of $\Omega$ and $|\Omega| = \mathcal{O}(1)$.

\begin{lemma} \label{lemma 2}
	Suppose that Assumptions~\ref{assumption 1} and~\ref{assumption 2} hold. Let $u \in \mathcal{B}_{k+m}^p(\Omega)$ for $p \ge 2$, $m \ge 0$, and an integer $k \ge 0$, where $m > \frac{p-1}{p}d$. 
	Define the truncated function $u^M$ by restricting the integration domain of the parameters to $G_{\xi}(M) \times G_{s}(M)$:
	\begin{equation} \label{eq 3.4}
		u^M(x) := \int_{G_\xi(M)} \int_{G_s(M)} \beta(\xi, s) \sigma(\xi \cdot x + s) \mathrm{d}s \mathrm{d}\xi,
	\end{equation}
	where
	\begin{equation*}
		\beta(\xi, s) =  \dfrac{a^d}{(2\pi)^d \widehat{\sigma}(a)} \widehat{u_e}(a \xi) e^{-ias}.
	\end{equation*}
	Then, 
	the following estimate holds:
	\begin{equation}\label{estma1}
		\left\| u - u^M\right\|_{W^{k,p}(\Omega)} \lesssim M^{-\eta} \left\| u\right\|_{\mathcal{B}_{k+m}^p(\Omega)},
	\end{equation}
	where $\eta = \min\left( m - \frac{p-1}{p} d, r-1\right)$.
\end{lemma}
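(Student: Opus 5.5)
We begin by checking that the untruncated integral reproduces $u$. Substitute $\xi\mapsto a\xi$ in the Fourier inversion formula \eqref{eq 3.3}, so that
\[
u(x)=\frac{a^d}{(2\pi)^d}\int_{\mathbb{R}^d} e^{ia\xi\cdot x}\widehat{u_e}(a\xi)\,\mathrm{d}\xi .
\]
Then use \eqref{eq 3.2} for $e^{ia\xi\cdot x}$. This gives $u(x)=\int_{\mathbb{R}^d}\int_{\mathbb{R}}\beta(\xi,s)\sigma(\xi\cdot x+s)\,\mathrm{d}s\,\mathrm{d}\xi$ a.e. in $\Omega$. Fubini is justified by Lemma~\ref{lemma 1} with $l=1$ together with the weighted integrability of $\widehat{u_e}$ implied by $u\in\mathcal{B}^p_{k+m}$. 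The same representation holds for $\partial^\alpha_x$ with $|\alpha|\le k$: differentiating under the integral produces the factor $\xi^\alpha\sigma^{(|\alpha|)}(\xi\cdot x+s)$, and the decay of $\sigma^{(\nu)}$ in Assumption~\ref{assumption 2} makes this legitimate.

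The error then splits as
\[
\partial^\alpha(u-u^M)=E_1+E_2,
\]
where $E_1$ integrates over $|\xi|>M/(2R)$ and all $s$, and $E_2$ integrates over $|\xi|\le M/(2R)$ and $|s|>M$.

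For $E_1$, bound pointwise in $x$ using Lemma~\ref{lemma 1} with $l=1$:
\[
|E_1(x)|\lesssim \int_{|\xi|>M/2R}|\xi|^{|\alpha|}(1+|a\xi|)\,|\widehat{u_e}(a\xi)|\,\mathrm{d}\xi .
\]
After rescaling and inserting the weight $(1+|\xi|)^{k+m}$, apply Hölder with exponents $p$ and $p'=p/(p-1)$. One gets $\|u\|_{\mathcal{B}^p_{k+m}}$ times $\bigl(\int_{|\xi|\gtrsim M}(1+|\xi|)^{-(m-1)p'}\,\mathrm{d}\xi\bigr)^{1/p'}$, which behaves like $M^{-(m-1)+d/p'}$. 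Since $d/p'=\frac{p-1}{p}d$, this is the truncation rate $M^{-(m-\frac{p-1}{p}d)}$, up to the bookkeeping of the factor $(1+|a\xi|)$. That factor costs one power of $|\xi|$. To absorb it, I would use a sharper form of Lemma~\ref{lemma 1}: in $E_1$ one does not need the full $s$-integral. Equivalently, I would bound $E_1$ directly as the Fourier tail $\int_{|\xi|>M'}|\xi|^{|\alpha|}|\widehat{u_e}(\xi)|\,\mathrm{d}\xi$, obtained by integrating out $s$ exactly via \eqref{eq 3.2} before estimating. This is cleaner and avoids the $R|\xi|$ loss. Since $\Omega$ is bounded, $\|E_1\|_{L^p(\Omega)}\lesssim|\Omega|^{1/p}\sup|E_1|$.

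For $E_2$, on the set $|\xi|\le M/(2R)$ with $x\in\Omega$, we have $|\xi\cdot x|\le M/2$. Hence $|s|>M$ forces $|\xi\cdot x+s|\ge |s|/2$, and
\[
|\sigma^{(\nu)}(\xi\cdot x+s)|\lesssim(1+|s|/2)^{-r}.
\]
Integrating over $|s|>M$ gives a factor $M^{1-r}$. What remains is $\int|\xi|^{|\alpha|}|\widehat{u_e}(a\xi)|\,\mathrm{d}\xi$, which is bounded by $\|u\|_{\mathcal{B}^p_{k+m}}$, again by Hölder, because $m>\frac{p-1}{p}d$ makes $(1+|\xi|)^{-mp'}$ integrable. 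So $|E_2|\lesssim M^{-(r-1)}\|u\|_{\mathcal{B}^p_{k+m}}$.

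Summing over $|\alpha|\le k$ and taking the worse of the two rates gives $M^{-\eta}$ with $\eta=\min(m-\frac{p-1}{p}d,\,r-1)$.

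The main obstacle I expect is the exponent bookkeeping in $E_1$: keeping the derivative weight $|\xi|^{|\alpha|}$ and avoiding an extra power of $|\xi|$ from the $s$-integration, so that exactly the exponent $m-\frac{p-1}{p}d$ appears. My first move is to collapse the $s$-integral analytically via \eqref{eq 3.2} on the full line before estimating.
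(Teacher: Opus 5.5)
Your proposal is correct and is essentially the paper's proof. Once you collapse the $s$-integral via \eqref{eq 3.2}, your $E_1$ is exactly the paper's Fourier-tail term $\partial_x^\alpha(u-u_e^M)$ from Step~1, and your $E_2$ is the paper's $s$-tail term $\partial_x^\alpha(u_e^M-u^M)$ from Step~2, bounded by the same decay-of-$\sigma$ plus H\"older argument. A minor aside: your Fubini justification need not invoke Lemma~\ref{lemma 1}, which would require $(1+|a\xi|)\widehat{u_e}(a\xi)\in L^1$, something $u\in\mathcal{B}^p_{k+m}(\Omega)$ does not guarantee in general. Instead, note that $\int_{\mathbb{R}}|\sigma^{(\nu)}(\xi\cdot x+s)|\,\mathrm{d}s=\|\sigma^{(\nu)}\|_{L^1(\mathbb{R})}$ for every $\xi$ and $x$, so $\int_{\mathbb{R}^d}|\xi|^{|\alpha|}|\widehat{u_e}(a\xi)|\,\mathrm{d}\xi<\infty$, which your H\"older step already provides, is enough.
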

\begin{proof}
	We first introduce a band-limited approximation of $u_e$ defined by
	\begin{equation} \label{eq 3.6}
		u_e^M := \dfrac{1}{(2 \pi)^d} \int_{|\xi|\le\frac{aM}{2R}} e^{i\xi \cdot x} \widehat{u_e}(\xi) \mathrm{d}\xi.
	\end{equation}
	The proof proceeds by estimating the $W^{k,p}(\Omega)$ error in two steps as follows.  
	
	\paragraph{Step 1: Estimate of $\left\| u - u_e^M\right\|_{W^{k,p}(\Omega)}$.}
	
	For any multi-index $\alpha$ with $|\alpha| \le k$, it follows from \eqref{eq 3.3} and \eqref{eq 3.6} that
	\begin{equation*}
		\partial_x^\alpha\left(u_e(x)-u_e^M(x)\right) = \dfrac{1}{(2 \pi)^d} \int_{|\xi|>\frac{aM}{2R}} \Big(\prod_{i=1}^d (i\xi_i)^{\alpha_i}\Big) e^{i\xi \cdot x} \widehat{u_e}(\xi) \mathrm{d}\xi.
	\end{equation*}
	Applying H\"older's inequality, we obtain
	\begin{equation*}
		\begin{aligned}
			| \partial_x^\alpha ( u_e(x)& - u_e^M(x)) |^p 
			{\lesssim \left(\int_{|\xi|>\frac{aM}{2R}}(1 + |\xi|)^{|\alpha|} \dfrac{\left| \xi \right|^m}{\left| \xi \right|^m} \left| \widehat{u_e}(\xi)\right|  \mathrm{d}\xi \right)^p} \\
			& \le \left(\int_{|\xi|>\frac{aM}{2R}}\left| \xi \right|^{-qm} \mathrm{d}\xi \right)^{\frac{p}{q}} \int_{|\xi|>\frac{aM}{2R}} \left( 1+|\xi|\right)^{p\left( |\alpha|+m\right) } \left| \widehat{u_e}(\xi)\right|^p \mathrm{d}\xi  \\
			& \le \left(\int_{|\xi|>\frac{aM}{2R}}\left| \xi \right|^{-qm}\mathrm{d}\xi \right)^{\frac{p}{q}} \|u\|_{\mathcal{B}_{|\alpha|+m}^p(\Omega)}^p,
		\end{aligned}
	\end{equation*}
	where $q$ is the conjugate exponent of $p$ (i.e.\ $1/p + 1/q = 1$). For $m > \frac{d}{q} = \frac{p-1}{p}d$, we have
	\begin{equation*}
		\begin{aligned}
			\int_{|\xi| > \frac{aM}{2R}} |\xi|^{-qm} \mathrm{d}\xi 
			= \omega_d \int_{\frac{aM}{2R}}^{\infty} r^{-qm+d-1} \mathrm{d}r 
			\lesssim M^{-qm+d}.
		\end{aligned}
	\end{equation*}
	Consequently, summing over all $|\alpha|\le k$ yields the desired estimate in $W^{k,p}(\Omega)$:
	\begin{equation} \label{ieq 3.7} 
		\left\| u - u_e^M\right\|_{W^{k,p}(\Omega)} \lesssim M^{-m + \frac{p-1}{p}d} \|u\|_{\mathcal{B}_{k+m}^p(\Omega)}.
	\end{equation}
	
	\paragraph{Step 2: Estimate of $\left\| u^M - u_e^M\right\|_{W^{k,p}(\Omega)}$.}
	
	Using the representation \eqref{eq 3.2} in the definition of $u_e^M$, we can rewrite $u_e^M$ in a form similar to $u^M$:
	\begin{equation*}
		\begin{aligned}
			u_e^M &=\int_{|\xi| \le \frac{M}{2R}} \int_{\mathbb{R}} \beta(\xi, s) \sigma\left( \xi \cdot x + s\right) \mathrm{d}s \mathrm{d}\xi.
		\end{aligned}
	\end{equation*}
	Consequently, the difference $u_e^M - u^M$ involves only the tail of the $s$-integration:
	\begin{equation*}
		u_e^M(x)-u^M(x) = \int_{|\xi| \le \frac{M}{2R}} \int_{|s| > M} \beta(\xi, s) \sigma(\xi \cdot x + s) \mathrm{d}s \mathrm{d}\xi.
	\end{equation*}
	For any multi-index $\alpha$ with $|\alpha| \le k$, we have
	\begin{equation*}
		\begin{aligned}
			\left| \partial_x^\alpha \left( u^M(x) - u_e^M(x)\right) \right|^p \lesssim \left( \int_{|\xi| \le \frac{M}{2R}} \int_{|s| > M} |a\xi|^{|\alpha|} |\widehat{u_e}(a \xi)|\left|\sigma^{(|\alpha|)}\left(\xi \cdot x + s \right) \right| \mathrm{d}s \mathrm{d}\xi \right)^p.
		\end{aligned}
	\end{equation*}
	Notice that in the integration domain of the above equation, we have $|s| - R|\xi| \ge M/2$.
	Using the decay property of $\sigma$, it follows that
	\begin{equation*}
		\begin{aligned}
			\int_{|s| > M} \left| \sigma^{(|\alpha|)}(\xi \cdot x + s) \right| \mathrm{d}s
			\lesssim \int_{|s| > M} \left( 1 + |s| - R|\xi|\right)^{-r} \mathrm{d}s 
			\lesssim (1 + M/2)^{1-r}.
		\end{aligned}
	\end{equation*}
	Thus, for $M \ge 2$, we have
	\begin{equation*}
		\int_{|s| > M} \left| \sigma^{(|\alpha|)}(\xi \cdot x + s) \right|\mathrm{d}s \lesssim M^{1-r}.
	\end{equation*}
	Therefore, it follows from H\"older's inequality and the fact that $m> \frac{d}{q}$ that
	\begin{equation*}
		\begin{aligned}
			| \partial_x^\alpha ( u^M(x)& - u_e^M(x)) |^p 
			\lesssim  M^{p(1-r)} \left( \int_{|\xi| \le \frac{M}{2R}} |a\xi|^{|\alpha|} |\widehat{u_e}(a \xi)| \mathrm{d}\xi\right)^p \\
			&\lesssim  M^{p(1-r)} \left( \int_{|\xi| \le \frac{M}{2R}} \dfrac{(1 + |a\xi|)^{m}}{(1 + |a\xi|)^{m}} \left( 1+|a\xi|\right) ^{|\alpha|}|\widehat{u_e}(a \xi)| \mathrm{d}\xi\right)^p \\
			& \lesssim  M^{p(1-r)} \left( \int_{\mathbb{R}^d} \left( 1+|a\xi|\right) ^{-qm} \mathrm{d}\xi \right)^{p/q} \|u\|_{\mathcal{B}_{|\alpha|+m}^p(\Omega)}^p\\
			&\lesssim  M^{p(1-r)} \|u\|_{\mathcal{B}_{|\alpha|+m}^p(\Omega)}^p.
		\end{aligned}
	\end{equation*}
	Summing over all $|\alpha|\le k$, we obtain
	\begin{equation} \label{ieq 3.8}
		\left\| u^M - u_e^M\right\|_{W^{k,p}(\Omega)} \lesssim M^{1-r} \|u\|_{\mathcal{B}_{k+m}^p(\Omega)}.
	\end{equation}
	Thus, combining \eqref{ieq 3.7} and \eqref{ieq 3.8} via the triangle inequality, we conclude the estimate \eqref{estma1} immediately.
\end{proof}

\begin{lemma} \label{lemma 3}
	Let $(\Omega, \mathcal{F}, \mu)$ be a bounded measure space and $(\mathcal{D}, \mathcal{B}, \mathcal{P})$ be a probability space. {Let $X_i:\mathcal{D} \times \Omega \to \mathbb{C}, (i = 1, \dots, N)$ be i.i.d. random fields} which are measurable with respect to the product $\sigma$-algebra $\mathcal{B} \otimes \mathcal{F}$. Assume that for each $x \in \Omega$, {the maps $X_i(\cdot, x): \mathcal{D} \to \mathbb{C}, i=1,\dots,N$ are i.i.d. random variables} on $(\mathcal{D}, \mathcal{B}, \mathcal{P})$, and for $p \in [2, \infty)$, $\mathbb{E} \left[ \|X_1\|_{L^p(\Omega)}^p \right] < \infty$, where $\|X_1\|_{L^p(\Omega)}^p = \int_\Omega |X_1(\cdot, x)|^p \, \mathrm{d}\mu(x)$.
	Then the following estimate holds:
	\begin{equation*}
		\mathbb{E} \left[ \left\| \mathbb{E} \left[ X_1 \right] - \dfrac{1}{N} \mathop{\sum}_{i=1}^N X_i \right\|_{L^p(\Omega)} \right] \le \dfrac{ C_p}{N^{1/2}} \left( \int_\Omega \mathbb{E} \left[ \left| \mathbb{E}\left[ X_1\right]  - X_1\right| ^p\right]  \mathrm{d}\mu(x) \right)^{\frac{1}{p}},
	\end{equation*}
	where $C_p>0$ depends only on $p$.
\end{lemma}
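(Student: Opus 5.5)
Since $\mathbb{E}[X_1]$ and the averages are random fields, the plan is to reduce the $L^p(\Omega)$-valued statement to a pointwise scalar moment bound and then integrate. First, by Jensen's inequality (concavity of $t\mapsto t^{1/p}$),
\begin{equation*}
\mathbb{E}\left[\left\|\mathbb{E}[X_1]-\frac1N\sum_{i=1}^N X_i\right\|_{L^p(\Omega)}\right]\le\left(\mathbb{E}\left[\int_\Omega\left|\frac1N\sum_{i=1}^N Y_i(\cdot,x)\right|^p\mathrm{d}\mu(x)\right]\right)^{1/p},
\end{equation*}
where $Y_i:=\mathbb{E}[X_1]-X_i$. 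Joint measurability in $\mathcal{B}\otimes\mathcal{F}$ together with $\mathbb{E}\|X_1\|_{L^p}^p<\infty$ lets me apply Tonelli to exchange $\mathbb{E}$ and $\int_\Omega$. It also guarantees that $\mathbb{E}[X_1](x)$ is finite for $\mu$-a.e.\ $x$: $\mu$ is finite and $p\ge1$, so $\int_\Omega\mathbb{E}|X_1|\,\mathrm{d}\mu<\infty$. So it suffices to bound $\mathbb{E}|\frac1N\sum_i Y_i(\cdot,x)|^p$ for each fixed $x$.

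For fixed $x$, the $Y_i(\cdot,x)$ are i.i.d., complex valued and mean zero. I will apply the Marcinkiewicz--Zygmund inequality; equivalently, symmetrize and use Khintchine's inequality. Complex values are handled by splitting into real and imaginary parts at the cost of a constant $2^{p}$. This gives
\begin{equation*}
\mathbb{E}\left|\sum_{i=1}^N Y_i\right|^p\le B_p\,\mathbb{E}\left(\sum_{i=1}^N|Y_i|^2\right)^{p/2}.
\end{equation*}
Since $p\ge2$, I then use H\"older (power-mean) in the form $(\sum_i|Y_i|^2)^{p/2}\le N^{p/2-1}\sum_i|Y_i|^p$. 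Taking expectations and using identical distribution yields
\begin{equation*}
\mathbb{E}\left|\frac1N\sum_{i=1}^N Y_i\right|^p\le B_p N^{-p}N^{p/2}\,\mathbb{E}|Y_1|^p=B_pN^{-p/2}\,\mathbb{E}\left|\mathbb{E}[X_1]-X_1\right|^p.
\end{equation*}

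Integrating over $\Omega$ and taking the $p$-th root gives the claim with $C_p=B_p^{1/p}$, which depends only on $p$. The main point needing care is the moment inequality for mean-zero sums with a constant independent of $N$. If I want a self-contained argument rather than citing Marcinkiewicz--Zygmund, I would symmetrize. Let $Y_i'$ be independent copies; then by Jensen $\mathbb{E}|\sum Y_i|^p\le\mathbb{E}|\sum(Y_i-Y_i')|^p$. The difference $Y_i-Y_i'$ is symmetric, so its law is unchanged under independent Rademacher signs $\epsilon_i$. Conditioning on the $Y$'s and applying Khintchine's inequality then bounds this by $A_p\,\mathbb{E}(\sum|Y_i-Y_i'|^2)^{p/2}\le 2^{p}A_p\,\mathbb{E}(\sum|Y_i|^2)^{p/2}$, up to constants. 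The measurability issues (a.e.\ finiteness of the mean, Fubini) are routine given the hypotheses.
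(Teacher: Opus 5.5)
Your proof is correct and follows the paper's argument essentially step for step. The steps are: Tonelli to swap $\mathbb{E}$ and $\int_\Omega$, Marcinkiewicz--Zygmund at each fixed $x$, a reduction of $\mathbb{E}\bigl(\sum_i |Y_i|^2\bigr)^{p/2}$ to $N^{p/2}\,\mathbb{E}|Y_1|^p$, and Jensen for the outer expectation. The differences are minor: the paper does that reduction with Minkowski's inequality in $L^{p/2}(\mathcal{P})$ rather than your discrete power-mean bound (both give the same constant), and you also spell out the complex-valued case and the a.e.\ finiteness of $\mathbb{E}[X_1]$, which the paper leaves implicit.
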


The proof of Lemma~\ref{lemma 3} can be found in Appendix~\ref{appendix A}. Based on the lemmas and assumptions established above, we now establish the following approximation theorem:

\begin{theorem} \label{Theorem 1}
	Suppose that Assumptions~\ref{assumption 1} and~\ref{assumption 2} hold, and assume that the target function $u \in \mathcal{B}_{k+m}^{p}(\Omega)$ for $p \ge 2$, $m \ge 0$ and an integer $k \ge 0$, with $m > \frac{p-1}{p}d$. Then, when the truncation parameter {$M \asymp  N^{\frac{p}{2[(p-1)(d+1) + p\eta]}}$,}
	the following error estimate holds:
	\begin{equation}\label{mainerror}
		\mathbb{E}\left[ \| u - U_W^{A,B} \|_{W^{k,p}(\Omega)} \right] \lesssim \|u\|_{\mathcal{B}_{k+m}^p(\Omega)} \, N^{-\frac{p \eta}{2[(p-1)(d+1) + p\eta]}},
	\end{equation}
	where $U_W^{A,B}$ is defined in (\ref{SLFN}) with parameters $(A_i, B_i)$, $i=1,\dots,N$, sampled i.i.d. from the uniform distribution over the bounded set $G_\xi(M) \times G_s(M)$, and $\eta = \min\left( m - \frac{p-1}{p} d, r-1\right)$.
\end{theorem}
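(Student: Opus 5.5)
The plan is to split the error through the truncated integral representation $u^M$ of Lemma~\ref{lemma 2}:
\[
\mathbb{E}\bigl[\|u-U_W^{A,B}\|_{W^{k,p}(\Omega)}\bigr]\le \|u-u^M\|_{W^{k,p}(\Omega)}+\mathbb{E}\bigl[\|u^M-U_W^{A,B}\|_{W^{k,p}(\Omega)}\bigr].
\]
Lemma~\ref{lemma 2} bounds the first term by $M^{-\eta}\|u\|_{\mathcal{B}^p_{k+m}(\Omega)}$. For the second term we do not need the least-squares weights. It suffices to exhibit one admissible choice of $W$, since the network realizing the best (or any specific) choice is dominated by it. Let $V=|G_\xi(M)\times G_s(M)|\asymp M^{d+1}$. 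Set
\[
W_i=\frac{V}{N}\,\beta(A_i,B_i),\qquad X_i(x)=V\beta(A_i,B_i)\sigma(A_i\cdot x+B_i).
\]
With $(A_i,B_i)$ uniform on the box, $\mathbb{E}[X_1]=u^M$, and the same holds for all $x$-derivatives, by differentiation under the integral, which is justified since $\sigma\in W^{k,\infty}$ and $\widehat{u_e}$ is integrable. Hence $U_W^{A,B}=\frac1N\sum_i X_i$ is an empirical mean of i.i.d.\ random fields. We apply Lemma~\ref{lemma 3} to $\partial^\alpha_x X_i$ for each $|\alpha|\le k$ and sum, obtaining
\[
\mathbb{E}\bigl[\|u^M-U_W^{A,B}\|_{W^{k,p}}\bigr]\lesssim N^{-1/2}\sum_{|\alpha|\le k}\Bigl(\int_\Omega \mathbb{E}\bigl[|\partial^\alpha X_1|^p\bigr]\,dx\Bigr)^{1/p},
\]
where centering costs only a factor 2 by Jensen.

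The main step is the moment bound. Writing the expectation as an integral,
\[
\mathbb{E}\bigl[|\partial^\alpha X_1(x)|^p\bigr]=V^{p-1}\int_{G_\xi}\int_{G_s}|\beta|^p\,|a\xi|^{... }
\]
more precisely it equals $V^{p-1}\int\!\!\int |\beta(\xi,s)|^p|\xi^\alpha|^p|\sigma^{(|\alpha|)}(\xi\cdot x+s)|^p\,ds\,d\xi$. Since $|\beta|$ does not depend on $s$, Lemma~\ref{lemma 1} with $l=p$ bounds the $s$-integral by $CR(1+|a\xi|)$. Integrating over $\Omega$ (with $|\Omega|=\mathcal{O}(1)$) gives at most
\[
V^{p-1}\int_{\mathbb{R}^d}(1+|a\xi|)^{p|\alpha|+1}|\widehat{u_e}(a\xi)|^p\,d\xi .
\]
We then bound the extra factor $(1+|a\xi|)$ by $M$ on the support $|\xi|\le M/(2R)$. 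After the change of variables $a\xi\to\xi$ and using $m\ge0$ (so that $(1+|\xi|)^{p|\alpha|}\le(1+|\xi|)^{p(|\alpha|+m)}$), this is $\lesssim M^{(d+1)(p-1)+1}\|u\|^p_{\mathcal{B}^p_{k+m}}$. The second term is therefore of order $N^{-1/2}M^{\frac{(d+1)(p-1)+1}{p}}\|u\|$.

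This step is where I expect the main difficulty: the stated exponent has $(p-1)(d+1)$ without the extra $+1$. Absorbing the factor $(1+|a\xi|)$ more carefully would require one more power of spectral weight. That extra weight is available only if $m$ exceeds what is used, or alternatively one can observe that when $p\ge2$ the stray $M^{1/p}$ can be absorbed into the Barron norm $\mathcal{B}^p_{k+m}$ via $(1+|\xi|)^{1/p}\le(1+|\xi|)^{m}$ in the regime where $m\ge 1/p$. I would first try the latter, since $m>\frac{p-1}{p}d\ge\frac1p$ for $p\ge 2$ and $d\ge1$. The variance term then becomes $N^{-1/2}M^{(d+1)(p-1)/p}$.

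Finally, balancing $M^{-\eta}$ against $N^{-1/2}M^{(p-1)(d+1)/p}$ gives
\[
M^{\eta+\frac{(p-1)(d+1)}{p}}=N^{1/2},\qquad M\asymp N^{\frac{p}{2[(p-1)(d+1)+p\eta]}},
\]
and substituting back yields the rate $N^{-\frac{p\eta}{2[(p-1)(d+1)+p\eta]}}$ in \eqref{mainerror}, with $M\ge2$ holding for $N$ large as required by Lemma~\ref{lemma 2}.
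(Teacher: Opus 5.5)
Your proposal is correct and follows the paper's proof essentially step for step: the same splitting through $u^M$, the same importance weights $W_i=f(A_i,B_i)/N$ with $f=\beta/\pi_M$ (your $V\beta$), Lemma~\ref{lemma 3} applied to $\partial_x^\alpha X_i$ with centering costing a factor $2$, and the same balancing of $M^{-\eta}$ against $N^{-1/2}M^{(p-1)(d+1)/p}$. The fix you settle on for the stray factor $(1+|a\xi|)$ from Lemma~\ref{lemma 1} is also the paper's: it absorbs the factor into the spectral weight via $pm>1$, which follows from $m>\frac{p-1}{p}d\ge\frac1p$. You can therefore drop the detour that bounds this factor by $M$.
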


\begin{proof}    
	First, we reformulate the truncated function $u^M$ defined in \eqref{eq 3.4} as an expectation with respect to a probability measure:
	\begin{equation} \label{eq 3.10}
		u^M(x) = \mathbb{E}_{\widetilde{\pi}} \left[ f(\xi, s) \sigma(\xi \cdot x + s) \right],
	\end{equation}
	where $\widetilde{\pi}$ denotes the uniform probability measure on the set $G_\xi(M) \times G_s(M)$, $\pi_M$ is the corresponding uniform probability density function, i.e.,
	{
		\begin{equation*}
			\mathrm{d} \widetilde{\pi}(\xi, s) = \pi_M \mathrm{d}\xi \mathrm{d}s\ \text{with}\  \pi_M=\frac{1}{|G_\xi(M)|\,|G_s(M)|} = \frac{2^{d-1}\Gamma(d/2 + 1)}{\pi^{d/2}} \frac{R^d}{M^{d+1}},
		\end{equation*}
	}
	and the coefficient function is given by
	\begin{equation*}
		f(\xi, s) = \beta(\xi, s) / \pi_M.
	\end{equation*}
	Define the random vector
	\begin{equation*}
		W = (W_1, \dots, W_N), \quad W_i = \frac{1}{N} f(A_i, B_i).
	\end{equation*}
	Then the corresponding RaNN function is given by
	\begin{equation*}
		U_W^{A,B}(x) = \sum_{i=1}^{N} W_i \sigma(A_i \cdot x + B_i).
	\end{equation*}
	For each fixed $x \in \Omega$, define the random variables
	\begin{equation*}
		X_i(x) := f(A_i, B_i) \sigma(A_i \cdot x + B_i), \quad i=1,\dots,N.
	\end{equation*}
	For all multi-indices $\alpha$ with $|\alpha| \le k$, the $X_i(x)$ are i.i.d. and satisfy
	\begin{equation*}
		\mathbb{E}\left[ \partial_x^\alpha X_i(x) \right] = \partial_x^\alpha u^M(x).
	\end{equation*}
	Then it follows from Lemma~\ref{lemma 3} that
	\begin{equation} \label{ieq 3.11}
		\begin{aligned}
			\mathbb{E}\left[ \left\| \partial_x^\alpha u^M - \partial_x^\alpha U_W^{A,B} \right\|_{L^p(\Omega)} \right] 
			&\le \frac{C_p}{N^{1/2}} \biggl( \int_\Omega \mathbb{E} \left[ \left| \partial_x^\alpha u^M - \partial_x^\alpha X_1 \right|^p \right] \mathrm{d}x \biggr)^{1/p}.
		\end{aligned}
	\end{equation}
	To bound the right-hand side, we first estimate the pointwise expectation. By Jensen's inequality, we have
	\begin{equation}\label{pointerr}
		\begin{aligned}
			\mathbb{E} \left[ \left| \partial_x^\alpha u^M - \partial_x^\alpha X_1 \right|^p \right]
			&\le 2^{p-1} \left( \mathbb{E} \left[ |\partial_x^\alpha u^M|^p \right] + \mathbb{E} \left[ |\partial_x^\alpha X_1|^p \right] \right) \\
			&= 2^{p-1} \left( \left| \mathbb{E} [\partial_x^\alpha X_1] \right|^p + \mathbb{E} \left[ |\partial_x^\alpha X_1|^p \right] \right) \\
			&\le 2^{p} \, \mathbb{E} \left[ \left| \partial_x^\alpha X_1 \right|^p \right].
		\end{aligned}
	\end{equation}
	Further, from Lemma~\ref{lemma 1} with $l=p$ and the fact that $1 < pm$, it follows that 
	\begin{equation*}
		\small
		\begin{aligned}
			\mathbb{E} \left[ \left| \partial_x^\alpha X_1 \right|^p \right]
			&= \mathbb{E} \left[ \left| f(\xi, s) \, \partial_x^\alpha \sigma(\xi \cdot x + s) \right|^p \right] \\
			&= \mathbb{E} \left[ \left| f(\xi, s) \left( \prod_{i=1}^d a^{-\alpha_i} (a \xi_i)^{\alpha_i} \right) \sigma^{(|\alpha|)}(\xi \cdot x + s) \right|^p \right] \\
			&\lesssim \int_{G_\xi(M)} \int_{G_s(M)} \left| f(\xi, s) (1 + |a \xi|)^{|\alpha|} \sigma^{(|\alpha|)}(\xi \cdot x + s) \right|^p \pi_M \,\mathrm{d}s \,\mathrm{d}\xi \\
			&= \pi_M^{1-p}\int_{G_\xi(M)} \int_{G_s(M)} \left| (1 + |a \xi|)^{|\alpha|} \beta(\xi, s) \sigma^{(|\alpha|)}(\xi \cdot x + s) \right|^p \,\mathrm{d}s \,\mathrm{d}\xi \\
			&\lesssim {\pi_M^{1-p} \int_{G_\xi(M)} (1 + |a\xi|)^{p|\alpha|+1} |\widehat{u_e}(a\xi)|^p \int_{G_s(M)}\left| \sigma^{(|\alpha|) }\left( \xi \cdot x + s\right)\right|^p \mathrm{d}s \, \mathrm{d}\xi}\\
			&\lesssim {\pi_M^{1-p}\int_{G_\xi(M)} (1 + |a\xi|)^{p|\alpha|+pm} |\widehat{u_e}(a\xi)|^p\,\mathrm{d}\xi }\\
			&\lesssim M^{(p-1)(d+1)} \|u\|_{\mathcal{B}^p_{|\alpha|+m}(\Omega)}^p.
		\end{aligned}
	\end{equation*}
	Therefore, integrating \eqref{pointerr} over $\Omega$ and taking the $p$-th root gives
	\begin{equation*}
		\biggl( \int_\Omega \mathbb{E} \left[ \left| \partial_x^\alpha u^M - \partial_x^\alpha X_1 \right|^p \right] \mathrm{d}x \biggr)^{1/p} \lesssim M^{\frac{(p-1)(d+1)}{p}} \|u\|_{\mathcal{B}^p_{|\alpha|+m}(\Omega)}.
	\end{equation*}
	Thus, summing over all multi-indices with $|\alpha| \le k$ and combining with \eqref{ieq 3.11} yields
	\begin{equation*}
		\mathbb{E}\left[ \left\| u^M - U_W^{A,B} \right\|_{W^{k,p}(\Omega)} \right] \lesssim M^{\frac{(p-1)(d+1)}{p}} N^{-\frac{1}{2}} \|u\|_{\mathcal{B}^p_{k+m}(\Omega)}.
	\end{equation*}
	Finally, applying Minkowski's inequality and Lemma~\ref{lemma 2} yields the estimate
	\begin{equation*}
		\begin{aligned}
			\mathbb{E}\left[ \left\| u - U_W^{A,B} \right\|_{W^{k,p}(\Omega)} \right] 
			&\le \| u - u^M \|_{W^{k,p}(\Omega)} + \mathbb{E}\left[ \left\| u^M - U_W^{A,B} \right\|_{W^{k,p}(\Omega)} \right] \\
			&\lesssim \left( M^{-\eta} + M^{\frac{(p-1)(d+1)}{p}} N^{-\frac{1}{2}} \right) \|u\|_{\mathcal{B}^p_{k+m}(\Omega)}.
		\end{aligned}
	\end{equation*}
	Balancing the two terms by choosing $M$ optimally, i.e., $M \asymp N^{\frac{p}{2[(p-1)(d+1) + p\eta]}}$, we obtain the final error estimate \eqref{mainerror}.
\end{proof}
\begin{corollary}
	Let $\Omega$ be a bounded Lipschitz domain in $\mathbb{R}^d$ and suppose that the same assumptions as in Theorem~\ref{Theorem 1} hold. Then we have the following estimate:
	\begin{equation*}
		\mathbb{E}\left[ \left\|  u - U_W^{A,B} \right\|_{H^{k}(\Omega)} \right] \lesssim \|u\|_{H^{k+m}(\Omega)} \, N^{-\frac{\eta}{d + 2\eta + 1}},
	\end{equation*}
	where $U_W^{A,B}$ is defined as in Theorem~\ref{Theorem 1}, $\eta = \min\left( m - \frac{d}{2}, r-1 \right)$ and $m > \frac{d}{2}$.
\end{corollary}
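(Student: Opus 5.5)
This corollary is the case $p=2$ of Theorem~\ref{Theorem 1}, combined with a bound of the generalized Barron norm $\|u\|_{\mathcal{B}^2_{k+m}(\Omega)}$ by the Sobolev norm $\|u\|_{H^{k+m}(\Omega)}$. I will proceed in three steps.

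\textbf{Step 1: specialize the exponents.} With $p=2$ we have $\frac{p-1}{p}d = \frac d2$, so the hypothesis $m>\frac d2$ matches Theorem~\ref{Theorem 1}, and $\eta=\min(m-\frac d2, r-1)$. The exponent becomes
\begin{equation*}
\frac{p\eta}{2[(p-1)(d+1)+p\eta]} = \frac{2\eta}{2(d+1+2\eta)} = \frac{\eta}{d+2\eta+1},
\end{equation*}
with $M\asymp N^{1/(d+1+2\eta)}$. Since $W^{k,2}=H^k$, Theorem~\ref{Theorem 1} gives
\begin{equation*}
\mathbb{E}\bigl[\|u-U_W^{A,B}\|_{H^k(\Omega)}\bigr] \lesssim \|u\|_{\mathcal{B}^2_{k+m}(\Omega)}\, N^{-\frac{\eta}{d+2\eta+1}}.
\end{equation*}

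\textbf{Step 2: compare the norms.} For $p=2$ and multi-index order $0$,
\begin{equation*}
\|u\|_{\mathcal{B}^2_{k+m}(\Omega)} = \inf_{u_e|_\Omega=u}\Bigl(\int_{\mathbb{R}^d}(1+|\xi|)^{2(k+m)}|\widehat{u_e}(\xi)|^2\,\mathrm{d}\xi\Bigr)^{1/2}.
\end{equation*}
Since $(1+|\xi|)^{2s}\le 2^{2s}(1+|\xi|^2)^s$, Plancherel gives that this is at most $C\inf_{u_e}\|u_e\|_{H^{k+m}(\mathbb{R}^d)}$, including the case of noninteger $k+m$, where $H^{k+m}$ is defined via the Bessel potential.

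\textbf{Step 3: choose the extension.} Because $\Omega$ is a bounded Lipschitz domain, Stein's universal extension operator $E$ gives $\|Eu\|_{H^{k+m}(\mathbb{R}^d)}\lesssim\|u\|_{H^{k+m}(\Omega)}$ for every order; for fractional orders this follows by interpolation. This yields $\|u\|_{\mathcal{B}^2_{k+m}(\Omega)}\lesssim\|u\|_{H^{k+m}(\Omega)}$, and the corollary follows from Step 1.

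\textbf{Main obstacle: Assumption~\ref{assumption 1}.} The proof of Theorem~\ref{Theorem 1} works with a particular extension $u_e$, and that extension must satisfy Assumption~\ref{assumption 1}, namely $u_e\in L^1$ and $\widehat{u_e}\in L^1$. I would handle this as follows.
\begin{itemize}
\item Replace $Eu$ by $\chi Eu$, where $\chi$ is a smooth cutoff equal to $1$ on $\Omega$. Then $\chi Eu$ has compact support, hence lies in $L^1$, and it is still bounded in $H^{k+m}$ by $\|u\|_{H^{k+m}(\Omega)}$.
\item Since $m>\frac d2$, Cauchy--Schwarz gives
\begin{equation*}
\int|\widehat{u_e}|\le\Bigl(\int(1+|\xi|)^{-2m}\,\mathrm{d}\xi\Bigr)^{1/2}\|u_e\|_{H^m}<\infty,
\end{equation*}
so $\widehat{u_e}\in L^1$.
\item The estimates in Lemma~\ref{lemma 2} and Theorem~\ref{Theorem 1} hold for this specific $u_e$, with $\|u\|_{\mathcal{B}^2_{k+m}}$ replaced by the weighted $L^2$ norm of this $\widehat{u_e}$. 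That quantity is bounded by $\|u\|_{H^{k+m}(\Omega)}$, so the infimum in the definition of the Barron norm never needs to be attained.
\end{itemize}
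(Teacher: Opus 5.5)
Your proposal is correct and follows the paper's argument. You apply Theorem~\ref{Theorem 1} with $p=2$, so that $W^{k,2}=H^k$ and the exponent reduces to $\eta/(d+2\eta+1)$, and you bound $\|u\|_{\mathcal{B}^2_{k+m}(\Omega)}\lesssim\|u\|_{H^{k+m}(\Omega)}$ via Plancherel and a Sobolev extension operator on the Lipschitz domain. Your extra verification that $\chi Eu$ satisfies Assumption~\ref{assumption 1} (compact support, and $\widehat{u_e}\in L^1$ from $m>d/2$), and your remark that the estimates hold with the weighted norm of this particular $\widehat{u_e}$, tighten points the paper leaves implicit without changing the route.
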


\begin{proof}
	By the definition of the generalized Barron spectral norm (\ref{Generalized spectral norm}) and the Extension Theorem (see \cite{adams2003sobolev}), we have
	\begin{equation*}
		\begin{aligned}
			\|u\|_{\mathcal{B}_{k+m}^2(\Omega)}^2 
			&= \inf_{u_e|_\Omega = u} \biggl( \int_{\mathbb{R}^d} \left[ (1 + |\xi|)^{k+m} |\widehat{u_e}(\xi)| \right]^2 \mathrm{d}\xi \biggr) \\
			&\lesssim \inf_{u_e|_\Omega = u} \|u_e\|_{H^{k+m}(\mathbb{R}^d)}^2\lesssim \|u\|_{H^{k+m}(\Omega)}^2.
		\end{aligned}
	\end{equation*}
	The desired estimate then follows directly from Theorem~\ref{Theorem 1} with $p=2$.
\end{proof}

In the following theorem, we establish an improved convergence rate over Theorem~\ref{Theorem 1} by employing a stratified sampling strategy. Compared with Theorem~\ref{Theorem 1}, the argument requires stronger regularity of both the target function and the activation function. In particular, one additional order of regularity and decay is imposed on the activation function. The precise assumptions are stated explicitly in the theorem below.
\begin{theorem} \label{Theorem 2}
	Suppose that Assumption~\ref{assumption 1} holds and that $u\in\mathcal{B}^{1,p}_{k+m}(\Omega)$, where $p\in[2,\infty)$, $k\ge0$ is an integer, and
	$m>\frac{p-1}{p}d$. Let $\sigma:\mathbb{R}\to\mathbb{R}$ be a nonzero activation function. {Assume that, for some $r>1$,
		$
		\sigma\in W^{k+1,\infty}(\mathbb{R}),
		$
		and that there exists a constant $C_{r}>0$ such that
		$
		\left|\sigma^{(\nu)}(t)\right|
		\le
		C_{r}(1+|t|)^{-r}
		$ for every integer $0 \le \nu \le k+1$.} 
	Then, {if the parameter $M\asymp N^{\frac{p(d+3)}{2(d+1)[(p-1)(d+1)+p+p\eta]}},
		$}
	there exists a RaNN approximation $\widehat{U_W^{A,B}}$ whose parameters are sampled from the bounded set $G_\xi(M) \times G_s(M)$ and which satisfies the following estimate:
	{
		\begin{equation}\label{ieq 3.13}
			\mathbb{E}\left[ \| u - \widehat{U_W^{A,B}} \|_{W^{k,p}(\Omega)} \right] \lesssim \|u\|_{\mathcal{B}^{1, p}_{k+m}(\Omega)} \, N^{-\frac{p\eta(d+3)}{2(d+1)[(p-1)(d+1)+p+p\eta]}},
		\end{equation}
	}
	where $\eta = \min\left( m - \frac{p-1}{p} d, r-1 \right)$.
\end{theorem}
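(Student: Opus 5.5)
The plan is to keep the splitting from the proof of Theorem~\ref{Theorem 1}, $u-\widehat{U_W^{A,B}}=(u-u^M)+(u^M-\widehat{U_W^{A,B}})$. The truncation term is handled exactly as before: Lemma~\ref{lemma 2} gives $\|u-u^M\|_{W^{k,p}(\Omega)}\lesssim M^{-\eta}\|u\|_{\mathcal{B}^p_{k+m}(\Omega)}\le M^{-\eta}\|u\|_{\mathcal{B}^{1,p}_{k+m}(\Omega)}$. All the new work goes into the sampling term. There, i.i.d. uniform sampling is replaced by stratified sampling, with the aim of proving
\begin{equation*}
\mathbb{E}\left[\|u^M-\widehat{U_W^{A,B}}\|_{W^{k,p}(\Omega)}\right]\lesssim M^{\frac{(p-1)(d+1)}{p}+1}\,N^{-\frac12-\frac1{d+1}}\,\|u\|_{\mathcal{B}^{1,p}_{k+m}(\Omega)} .
\end{equation*}
Balancing $M^{-\eta}$ against this bound gives $M^{\eta+\frac{(p-1)(d+1)+p}{p}}\asymp N^{\frac{d+3}{2(d+1)}}$. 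This is exactly the stated choice of $M$, and substituting it back yields \eqref{ieq 3.13}.

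\textbf{Construction.} Partition $G_\xi(M)\times G_s(M)\subset\mathbb{R}^{d+1}$ into $N$ measurable cells $Q_1,\dots,Q_N$ of equal $\widetilde\pi$-measure $1/N$ and diameter $\lesssim M N^{-1/(d+1)}$. For example, intersect a cube grid with the set and merge boundary pieces. In each $Q_j$, draw one point $(A_j,B_j)$ uniformly and independently. Set $W_j=\frac1N f(A_j,B_j)$, where $f=\beta/\pi_M$, and $X_j(x)=f(A_j,B_j)\sigma(A_j\cdot x+B_j)$. Then $\mathbb{E}[\frac1N\sum_j \partial^\alpha_x X_j]=\partial_x^\alpha u^M$. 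The summands $\partial_x^\alpha X_j-\mathbb{E}\,\partial_x^\alpha X_j$ are independent and mean zero, but no longer identically distributed.

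\textbf{Sampling error.} First I need a version of Lemma~\ref{lemma 3} for independent, non-identically distributed summands. Its proof, via Marcinkiewicz--Zygmund/Rosenthal-type inequalities applied pointwise and then integrated over $\Omega$, should go through verbatim and give
\begin{equation*}
\mathbb{E}\Big\|\frac1N\sum_j(Y_j-\mathbb{E}Y_j)\Big\|_{L^p}\le \frac{C_p}{N^{1/2}}\Big(\int_\Omega\frac1N\sum_j\mathbb{E}|Y_j-\mathbb{E}Y_j|^p\,\mathrm{d}x\Big)^{1/p}.
\end{equation*}
Since $Y_j$ is sampled inside $Q_j$, I then bound $|Y_j-\mathbb{E}Y_j|$ by $\mathrm{diam}(Q_j)$ times an average of $|\nabla_{(\xi,s)} g_\alpha|$ over $Q_j$, where $g_\alpha(\xi,s)=f(\xi,s)\,\partial_x^\alpha\sigma(\xi\cdot x+s)$. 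This is a Poincaré-type estimate on each cell in the $L^p(\widetilde\pi|_{Q_j})$ sense. The factor $\mathrm{diam}(Q_j)\lesssim MN^{-1/(d+1)}$ is the source of the gain $N^{-1/(d+1)}$ over Theorem~\ref{Theorem 1}.

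\textbf{Main obstacle: bounding the gradient.} The derivatives of $g_\alpha$ produce three kinds of terms:
\begin{itemize}
\item $\partial_s$ of $e^{-ias}$ gives a factor $a=\mathcal{O}(1)$;
\item $\partial_s$ or $\partial_\xi$ of $\sigma^{(|\alpha|)}(\xi\cdot x+s)$ gives $\sigma^{(|\alpha|+1)}$ times $1$ or $|x|\le R$;
\item $\partial_\xi$ of $\widehat{u_e}(a\xi)$ gives $a\,\nabla\widehat{u_e}(a\xi)$, and this is why the norm $\mathcal{B}^{1,p}_{k+m}$ is needed.
\end{itemize}
One must also control $\partial_\xi$ of the monomial $\xi^\alpha$ that appears when differentiating in $x$; this costs at most one power of $(1+|a\xi|)$, which is absorbed by the weight. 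Once the cellwise Poincaré bound is summed over $j$, the result is a single integral over $G_\xi(M)\times G_s(M)$ with weight $\pi_M^{1-p}$. From there I repeat the computation in the proof of Theorem~\ref{Theorem 1}: Lemma~\ref{lemma 1} with $l=p$, applied to $\sigma^{(\nu)}$ for $\nu\le k+1$, followed by Hölder in $\xi$ using $m>\frac{p-1}{p}d$. This should give the bound $M^{(p-1)(d+1)}\|u\|^p_{\mathcal{B}^{1,p}_{k+m}}$ times $(MN^{-1/(d+1)})^p$. Taking $p$-th roots and multiplying by $N^{-1/2}$ gives the claimed sampling bound. I expect the delicate points to be making the cell-averaging step rigorous in $L^p$ and handling boundary cells of irregular shape.
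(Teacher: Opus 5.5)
Your proposal is correct and follows essentially the same route as the paper's proof in Appendix~B. The shared steps are the truncation/sampling split with Lemma~\ref{lemma 2}, stratification of $G_\xi(M)\times G_s(M)$ into about $N$ cells of diameter $\lesssim MN^{-1/(d+1)}$, Marcinkiewicz--Zygmund together with $(\sum_i a_i)^{p/2}\le N^{p/2-1}\sum_i a_i^{p/2}$, a cellwise Poincar\'e inequality, and Lemma~\ref{lemma 1} applied to $\sigma^{(\nu)}$ for $\nu\le k+1$, with $\nabla\widehat{u_e}$ absorbed by the $\mathcal{B}^{1,p}_{k+m}$ norm. The only difference is minor: the paper draws $c_i=\lceil\widetilde\pi(S_i)N\rceil$ samples per stratum with weights $\widetilde\pi(S_i)/c_i$, rather than one sample per cell of measure exactly $1/N$. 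Weighting each sample by the $\widetilde\pi$-measure of its cell is also the easy fix for the fact that a clipped cube grid does not give cells of measure exactly $1/N$.
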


The proof of this theorem is similar to that of Theorem~\ref{Theorem 1}. The proof is provided in Appendix~\ref{appendix B} for the convenience of the reader.

\begin{remark} \label{remark 2}
	The approximation theorems established in Theorems~\ref{Theorem 1} and~\ref{Theorem 2} exhibit a complex dependence on multiple parameters, such as the decay rate $r$ of the activation function and the smoothness of the target function $u$, which is characterized by the generalized Barron spectral norm (\ref{Generalized spectral norm}). Taking $p = 2$ and letting $\eta \to \infty$ (i.e., assuming that $u$ is sufficiently smooth and the activation function $\sigma$ decays rapidly), we obtain the following asymptotic rates:
	\[
	\text{Theorem~\ref{Theorem 1}: } \mathcal{O}\left( N^{-1/2} \right) \quad \text{and} \quad \text{Theorem~\ref{Theorem 2}: } \mathcal{O}\left( N^{-1/2 - 1/(d+1)} \right).
	\]
	This result demonstrates that the obtained rates match the optimal approximation rates established in the seminal works of Barron and Xu et al.\ for SLFNs (see \cite{siegel2020approximation}). This implies that for sufficiently smooth functions, restricting the sampling set and directly approximating the truncated function $u^M$ is justified. {However, if the target function $u$ lacks sufficient smoothness, the approximation by RaNNs under uniform sampling will inevitably deteriorate and further suffer from the curse of dimensionality through the explicit dependence on $d$.}
\end{remark}

\begin{remark} \label{remark 3}
	Theorems~\ref{Theorem 1} and~\ref{Theorem 2} explain the relationship between the sampling set $G_\xi(M) \times G_s(M)$ and the target function $u$ for achieving optimal approximation when using RaNNs. Taking Theorem~\ref{Theorem 1} as an example, the optimal rate is attained when
	\[
	M \asymp N^{\frac{p}{2[(p-1)(d+1) + p\eta]}}.
	\]
	{For a fixed RaNN architecture (i.e., for a fixed number of neurons $N$), the smaller $\eta$ is, the larger $M$ must be, implying that less smooth functions require a broader parameter sampling set to achieve optimal approximation. This provides practical guidance for tailoring the sampling strategy to the inherent regularity of the target function $u$.}
\end{remark}

\section{Adaptive RaNN method} \label{section 4}
As noted in Remark~\ref{remark 3}, approximating non-smooth target functions $u$ with RaNNs requires both a sufficiently large parameter sampling range
(i.e., a sufficiently large $M$) and a sufficiently large number of neurons $N$ to achieve satisfactory accuracy. However, many functions exhibit strong local features---such as regions with sharp gradients---where globally increasing the number of uniformly sampled random basis functions $\left\lbrace \phi_i \right\rbrace_{i=1}^N$ is computationally inefficient. Inspired by adaptive finite element methods, which refine local meshes to effectively capture local solution characteristics, we develop in this section an adaptive RaNN-based algorithm for accurately solving PDEs.

\subsection{RaNNs with PoU}
To enable the RaNNs to effectively capture local features of the target function $u$, we introduce a PoU. Specifically, for a bounded domain $\Omega \subset \mathbb{R}^d$ that can be covered by the closures of a finite collection of non-overlapping open cubes $\mathcal{P}_n = \left\lbrace \omega_j \right\rbrace_{j=1}^{n}$, we define a PoU $\left\lbrace \psi_j\right\rbrace_{j=1}^n$ as follows. For any point $x \in \Omega$, let $I(x) = \left\lbrace j \big| x \in \overline{\omega_j}\right\rbrace $ be the set of indices of cubes whose closures contain $x$. Then we set
\begin{equation} 
	\psi_j(x) = \left\lbrace 
	\begin{aligned}
		&\dfrac{1}{\# I(x)}, \quad && \mathrm{if}\; j\in I(x),\\
		&0, \quad && \mathrm{otherwise},
	\end{aligned}
	\right. 
	\quad j=1,2,\dots,n.
\end{equation}
We then introduce a reference element $\omega_{\mathrm{ref}} = [-1, 1]^d$, which induces an affine transformation $\mathcal{T}_j$ mapping each physical element $\omega_j$ to $\omega_{\mathrm{ref}}$. On the reference element, we generate a family of basis functions $\left\lbrace \phi_i \right\rbrace_{i=1}^{N}$ by sampling from a probability distribution $\rho$. The resulting RaNNs with PoU can be written as
\begin{equation} \label{eq 4.2}
	\begin{split}
		\widetilde{U_W^{A,B}}(x) &= \sum_{j=1}^n \sum_{i=1}^N W_{i,j} \psi_j(x) \phi_{i}(\mathcal{T}_j(x)) \\
		&=  \sum_{j=1}^n \sum_{i=1}^N W_{i,j} \psi_j(x) \sigma \left( A_{i} \cdot \mathcal{T}_j(x) + B_{i}\right).
	\end{split}
\end{equation} 
Alternatively, one may generate independent parameter sets for each subdomain, leading to the more general expression
\begin{equation} \label{eq 4.3}
	\widetilde{U_W^{A,B}}(x) = \sum_{j=1}^n \sum_{i=1}^N W_{i,j} \psi_j(x) \sigma \left( A_{i,j} \cdot \mathcal{T}_j(x) + B_{i,j}\right),
\end{equation} 
where $\left( A_{i,j}, B_{i,j}\right)$, $i=1,\dots,N$, $j=1,\dots,n$, are uniformly sampled from $G_{\xi}(M) \times G_s(M)$ for each $i, j$. 

Given a partition $\mathcal{P}_n$ of $\Omega$, we set $\Omega_j = \omega_j \cap \Omega$, so that $\overline{\Omega} = \overline{\bigcup_{j=1}^n \Omega_j}$. The RaNNs with PoU thus define a piecewise function on $\Omega$. To later assess the continuity and regularity across subdomain interfaces, we introduce the following notation: let $\mathcal{E}^I$ denote the set of interior edges (or faces) between adjacent subdomains $\Omega_j$ within $\Omega$, and let $\mathcal{E}^D$ denote the set of boundary edges where $\partial \Omega \cap \partial \Omega_j \neq \emptyset$. For an interior edge $\Gamma \in \mathcal{E}^I$ shared by $\Omega_1$ and $\Omega_2$, the jump of a function $v$ across $\Gamma$ is defined as
\begin{equation*}
	\left[v\right]_\Gamma(x) = \lim_{y \to x, y\in \Omega_1}v(y) - \lim_{y \to x, y\in \Omega_2}v(y), \quad x \in \Gamma.
\end{equation*}

\begin{figure}[htbp] 
	\centering
	\begin{minipage}{0.9\textwidth}
		\centering
		\includegraphics[width=\linewidth]{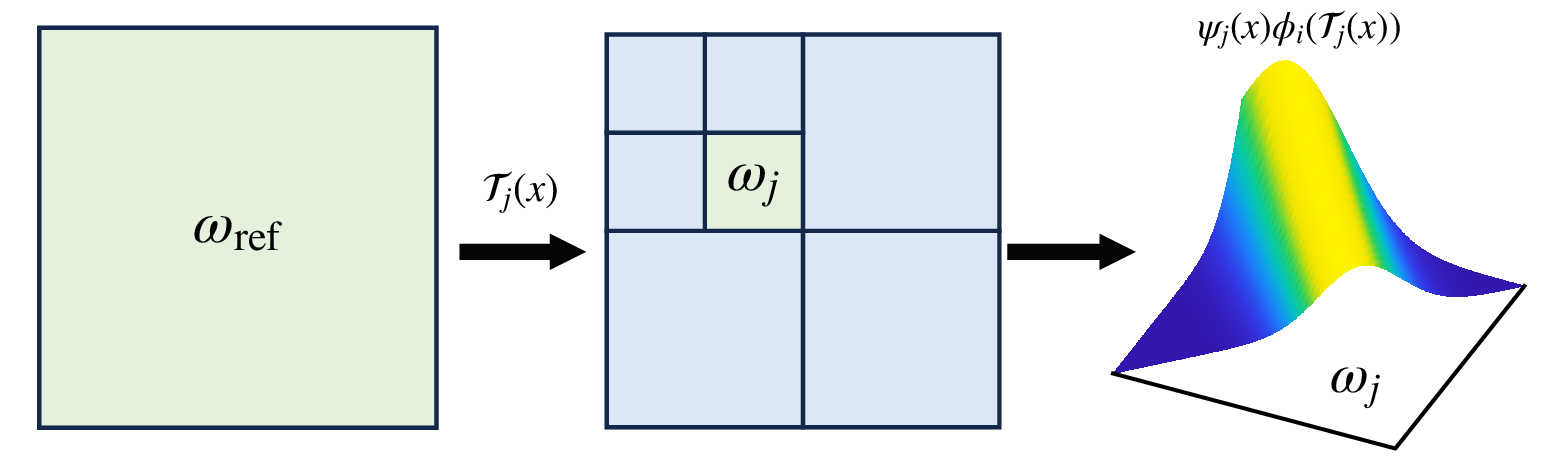}
	\end{minipage}
	\caption{Schematic diagram of two-dimensional local basis functions for the PoU-based RaNN.}\label{Fig:local_basis_function}
\end{figure}
\subsection{Adaptive PIRaNNs for solving PDEs} \label{section 4.2}
{
	For the RaNNs with a PoU (\ref{eq 4.3}), the basis functions
	\[
	\left\lbrace  \psi_j(x) \sigma \left( A_{i,j} \cdot \mathcal{T}_j(x) + B_{i,j}\right)\right\rbrace 
	\]
	are generated on the reference element $\omega_{\mathrm{ref}}$, where the parameters are drawn uniformly from the given set
	$$\left( A_{i,j}, B_{i,j}\right) \sim \mathcal{U}\left( G_{\xi}(M) \times G_s(M)\right).$$
	On the physical element $\omega_j$, the inner-layer parameters of the basis functions take the form
	\[
	A_{i,j} \cdot \mathcal{T}_j(x) + B_{i,j} = A_{i,j} \cdot \left( \mathcal{A}_j x + b_j\right) + B_{i,j},
	\]
	where $\mathcal{A}_j \in \mathbb{R}^{d \times d}$ and $b_j \in \mathbb{R}^{d}$ are associated with the affine mapping $\mathcal{T}_j$.} Consequently, (\ref{eq 4.3}) can be rewritten as follows:
\begin{equation} \label{eq 4.5}
	\widetilde{U_W^{A,B}}(x) = \sum_{j=1}^n \sum_{i=1}^N W_{i,j} \psi_j(x) \sigma \left( \widetilde{A_{i,j}} \cdot x + \widetilde{B_{i,j}}\right),
\end{equation} 
where $(\widetilde{A_{i,j}}, \widetilde{B_{i,j}}) = (A_{i,j} \cdot \mathcal{A}_j, B_{i,j} + A_{i,j} \cdot b_j)$. When the physical element $\omega_j$ is smaller than the reference element, the parameters $\bigl\{ (\widetilde{A_{i,j}}, \widetilde{B_{i,j}}) \bigr\}$ are effectively scaled and shifted relative to the original sampling set. This implies that a finer partition (i.e., one yielding smaller physical subdomains) corresponds to a locally enlarged effective sampling range for the RaNN basis functions. This mechanism perfectly aligns with the theoretical findings presented in Section~\ref{section 3}, which demonstrate that resolving features with lower regularity—such as large local gradients or singularities—inherently requires a broader sampling range. {The construction of the local basis functions is illustrated in Fig.~\ref{Fig:local_basis_function}.}

Therefore, in regions where the target function $u$ exhibits limited local smoothness, refining the PoU enables the RaNNs to resolve local features more effectively. This reformulation transforms the original challenge---namely, how to construct appropriate local basis functions under a uniform sampling distribution that must adapt to the smoothness of $u$---into the more tractable problem of designing a suitable partition of the domain $\Omega$. The latter is considerably more straightforward to implement in practice. Moreover, the construction of the PoU can be informed by a posteriori error estimates of the approximate solution, thereby providing a foundation for an adaptive computational strategy.

\begin{remark} We emphasize that although the analysis in Section~\ref{section 3} elucidates the dependence of the sampling parameter $M$ on the smoothness of the target function, it does not provide an explicit criterion for selecting $M$. To address this, here we establish a link between the effective sampling range and the mesh size of the PoU grid. In doing so, the local sampling parameter $M$ is determined adaptively as a consequence of adaptive mesh refinement. 
\end{remark}

To introduce the adaptive method, we need to assume that there exist local error indicators $\{\eta_j\}_{j=1}^n$, computable from the numerical solution on each subdomain $\Omega_j$, $j=1,\cdots,n$. The indicators provide an upper bound for the error $e = u - \widetilde{U_W^{A,B}}$ in the normed space $X$. Specifically, there exists a constant $C > 0$, depending only on the partition, such that
\begin{equation}
	\sum_{j=1}^n \| e \|_X \le C \sum_{j=1}^n \eta_j.
\end{equation}

Based on the above assumption, the standard adaptive finite element method for solving PDEs proceeds through iterations of the form
\begin{equation} \label{Adaptive workflow}
	\textsf{Solve} \rightarrow \textsf{Estimate} \rightarrow \textsf{Mark} \rightarrow \textsf{Refine}.
\end{equation}

Following this paradigm, we propose an adaptive PIRaNN algorithm, detailed below. 
The procedure begins with an initial, uniform partition of unity $\mathcal{P}_0$. For each adaptive iteration $k=0,1,2,\dots$, the following steps are performed:

\begin{itemize}
	\item \textbf{Solve:} Construct and train a PIRaNN approximation $u_k$ on the current partition $\mathcal{P}_k$ by solving the PDE.
	\item \textbf{Estimate:} Compute local error indicators $\eta_j$ for each element $\Omega_j$ derived from the numerical solution $u_k$.
	\item \textbf{Mark:} Identify a set of elements $\widehat{\mathcal{P}}_k \subseteq \mathcal{P}_k$ to be refined. In this work, we employ the Dörfler marking strategy \cite{dorfler1996convergent}: given a parameter $0 < \theta \le 1$, mark elements such that
	\[
	\eta_{\widehat{\mathcal{P}}_k} \ge \theta \, \eta_{\mathcal{P}_k},
	\]
	where $\eta_{\widehat{\mathcal{P}}_k} = \sum_{\omega_j \in \widehat{\mathcal{P}}_k} \eta_j$ and $\eta_{\mathcal{P}_k} = \sum_{j=1}^{|\mathcal{P}_k|} \eta_j$.
	\item \textbf{Refine:} Bisect (or otherwise refine) all marked elements in $\widehat{\mathcal{P}}_k$ to generate a new, finer partition $\mathcal{P}_{k+1}$.
\end{itemize}
This iterative process is repeated until a stopping criterion is met. The result is an automatically adapted PoU that dynamically aligns with the regularity of the PDE solution $u$, enabling the PIRaNN to efficiently and accurately capture its local features.


\begin{remark}
	To illustrate the adaptive PIRaNN algorithm in a concrete setting, we consider the following Poisson equation with a Dirichlet boundary condition:
	\begin{equation} \label{Poisson equation}
		\begin{aligned}
			-\Delta u &= f, \quad &&\text{in } \Omega,\\
			u &= g, \quad &&\text{on } \partial\Omega,
		\end{aligned}
	\end{equation}
	where $\Omega \subset \mathbb{R}^d$ is a bounded Lipschitz domain. For a given partition $\mathcal{P}_n$ of the domain $\Omega$, the global problem \eqref{Poisson equation} is equivalent to solving the following Poisson equation locally on each $\Omega_j$:
	\begin{equation} \label{local Poisson equation}
		\begin{aligned}
			-\Delta u_j &= f, \quad &&\text{in } \Omega_j,\\
			u_j &= g_j, \quad &&\text{on } \partial\Omega_j,
		\end{aligned}
	\end{equation}
	where the boundary data $g_j$ is given by
	\begin{equation} \label{eq 4.9}
		g_j = \left\lbrace 
		\begin{aligned}
			& g, \quad &&\text{on } \partial \Omega, \\
			& \gamma v_j, &&\text{otherwise},
		\end{aligned}
		\right. 
	\end{equation}
	where $\gamma$ is the trace operator and $v_j$ represents the solution function on the subdomains adjacent to $\Omega_j$. Let $e = u - \widetilde{U_W^{A,B}}$. Substituting $e$ into \eqref{local Poisson equation} and applying standard energy estimates, we readily obtain the following local energy estimate:
	\begin{equation}
		\| e \|_{H^1(\Omega_j)} \le C \left( \left\|  f + \Delta \widetilde{U_W^{A,B}} \right\| _{L^2(\Omega_j)}  + \left\|  g_j - \widetilde{U_W^{A,B}} \right\| _{H^{1/2}(\partial\Omega_j)} \right).
	\end{equation}
	Therefore, summing the local estimates over all elements according to the definition of $g_j$ in \eqref{eq 4.9}, we obtain the global piecewise energy estimate as follows:
	\begin{equation} \label{energy estimate}
		\begin{aligned}
			\sum_{j=1}^{n} \| e \|_{H^1(\Omega_j)} 
			\lesssim& \sum_{j=1}^{n} \left\|  f + \Delta \widetilde{U_W^{A,B}} \right\| _{L^2(\Omega_j)} 
			+ \sum_{\Gamma \in \mathcal{E}^I} \left\| \left[ \widetilde{U_W^{A,B}} \right]_\Gamma \right\|_{H^{1/2}(\Gamma)} \\
			& \qquad + \sum_{\Gamma \in \mathcal{E}^D} \left\|  \widetilde{U_W^{A,B}} - g \right\| _{H^{1/2}(\Gamma)}.
		\end{aligned}
	\end{equation}
	Inequality \eqref{energy estimate} implies that the piecewise $H^1$ error of the neural network approximation vanishes as its right-hand side tends to zero. This makes the right-hand side of \eqref{energy estimate} naturally suitable as an a posteriori error estimator. {We thus define the local error indicator $\eta_j$ as
		\begin{equation} \label{local error indicator}
			\eta_j = \left\|  f + \Delta \widetilde{U_W^{A,B}} \right\| _{L^2(\Omega_j)} + \sum_{\Gamma \in \partial \Omega_j} \left( \left\| \llbracket \widetilde{U_W^{A,B}} \rrbracket_\Gamma \right\|_{L^{2}(\Gamma)} + \beta \left\|\llbracket \nabla \widetilde{U_W^{A,B}} \rrbracket_\Gamma \right\|_{L^{2}(\Gamma)} \right),  
		\end{equation}
		where
		\begin{equation}
			\llbracket \widetilde{U_W^{A,B}} \rrbracket_\Gamma := 
			\left\lbrace 
			\begin{aligned}
				& \widetilde{U_W^{A,B}} - g, \quad && \Gamma \subset \partial \Omega,\\
				&\left[ \widetilde{U_W^{A,B}} \right]_\Gamma, \quad && \text{otherwise}.
			\end{aligned}
			\right. 
		\end{equation}
		We substitute the $H^{1/2}(\Omega_j)$ norm with the $L^2(\Gamma)$ norm of the derivatives within the local error indicator $\eta_j$, given the computational challenges associated with evaluating fractional-order Sobolev norms. Moreover, to mitigate the excessive dominance of the $\left\|\llbracket \nabla \widetilde{U_W^{A,B}} \rrbracket_\Gamma \right\|_{L^{2}(\Gamma)}$ in the a posteriori error estimate, we introduce a scaling factor $\beta = \mathrm{diam}(\Omega_j)$ as a penalty parameter for $\Gamma \in \mathcal{E}^I$. }
\end{remark}


\section{Numerical results}\label{section 5}
In this section, we present several numerical examples to validate the theoretical findings and demonstrate the performance of the adaptive PIRaNN method for solving PDEs. The linear least-squares problem arising from \eqref{eq 2.3} is solved using the \texttt{lsqminnorm} function in \texttt{MATLAB}, while the Levenberg--Marquardt algorithm is employed for nonlinear cases. {All computations are performed on a single Intel Core i7-8700 CPU (3.20 GHz).} {The errors of the numerical solutions $U_W^{A,B}$ \eqref{SLFN} or $\widetilde{U_W^{A,B}}$ \eqref{eq 4.5} obtained using the PIRaNN method, as well as the local error indicators $\eta_j$, are evaluated numerically using a $10\times10$-point Gaussian quadrature rule for two-dimensional examples, and a $5\times5\times5$-point rule for three-dimensional ones. For the Poisson equation, we employ \eqref{local error indicator} as the a posteriori error indicator.} The definition of the loss function for solving PDEs \eqref{PDEs} with \eqref{eq 4.5} is given by
\begin{equation}
	\begin{aligned}
		\mathcal{L}(W) 
		&= \left\| \mathcal{D} \left[ \widetilde{U_W^{A,B}} \right] - f \right\|^2_{L^2(\Omega)} + \lambda_{\partial \Omega} \left\| \mathcal{B} \left[ \widetilde{U_W^{A,B}} \right] - g \right\|^2_{L^2(\partial\Omega)} \\
		&\quad + \biggl( \sum_{\Gamma \in \mathcal{E}^I} \lambda_{\Gamma,1} \left\| \left[ \widetilde{U_W^{A,B}} \right]_\Gamma \right\|^2_{L^2(\Gamma)}
		+ \lambda_{\Gamma,2} \left\| \left[ \nabla \widetilde{U_W^{A,B}} \right]_\Gamma \right\|^2_{L^2(\Gamma)} \biggr),
	\end{aligned}
\end{equation}
where $\lambda_{\partial\Omega}$, $\lambda_{\Gamma,1}$ and $\lambda_{\Gamma,2}$ are penalty parameters. For the linear case, the penalty parameters can be chosen following \cite{chen2022bridging} in general. In the nonlinear case considered here, we set $\lambda_{\partial\Omega} = \lambda_{\Gamma,1} = 100$ and $\lambda_{\Gamma,2} = 10$. 

For all subsequent numerical examples, the function $\tanh(x+0.5) - \tanh(x-0.5)$ is employed as the activation function. The RaNN local basis functions $\{\phi_i\}_{i=1}^N$ in \eqref{eq 4.2} are generated on the reference element $\omega_{\mathrm{ref}}$ exactly as described in Section~\ref{section 3}. 
We set $R=1$, i.e., $G_\xi(M) = B_{M/2}^d(0)$ and $G_s(M) = [-M, M]$. To sample the parameters $\{ A_i \}_{i=1}^N$ uniformly on the ball $B_{M/2}^d(0)$, we utilize the standard Gaussian distribution in $\mathbb{R}^d$. The complete sampling procedure can be expressed as

\begin{equation}
	A_i = \dfrac{M}{2} r_i^{1/d} \frac{Y_i}{\|Y_i\|_2}, \quad i=1,\dots,N,
\end{equation}
where $Y_i$ are i.i.d. standard Gaussian samples in $\mathbb{R}^d$, and $r_i$ are i.i.d. uniform samples on $[0, 1]$. This sampling scheme ensures that each $A_i$ is distributed uniformly on the ball $B_{M/2}^d(0)$.

\begin{example} \label{example1}
	Consider the following 2D Helmholtz equation defined in the domain $\Omega = [-0.5, 0.5]^2$ with an impedance boundary condition:
	\begin{equation}
		\begin{aligned}
			-\Delta u - k^2u &= f, \quad &&\text{in } \Omega,\\
			\frac{\partial u}{\partial \mathbf{\nu}} - iku &= g, \quad &&\text{on } \partial \Omega,
		\end{aligned}
	\end{equation}
	where $i = \sqrt{-1}$ denotes the imaginary unit and $\mathbf{\nu}$ denotes the unit outward normal to $\partial \Omega$. The source term $f$ and boundary condition $g$ are chosen such that the exact solution is
	\begin{equation}
		u = \frac{\cos(kr)}{k} - \frac{\cos(k) + i\sin(k)}{k\left( J_0(k)+iJ_1(k) \right)} J_0(kr),
	\end{equation}
	where $J_a$ denotes the Bessel function of the first kind of order $a$, and $r=\sqrt{x^2+y^2}$. For every fixed $k$, the exact solution is analytic in $\Omega$. Increasing $k$ therefore increases the oscillation frequency.
\end{example}

\begin{figure}[htbp] 
	\centering
	\begin{minipage}{0.49\textwidth}
		\centering
		\includegraphics[width=\linewidth]{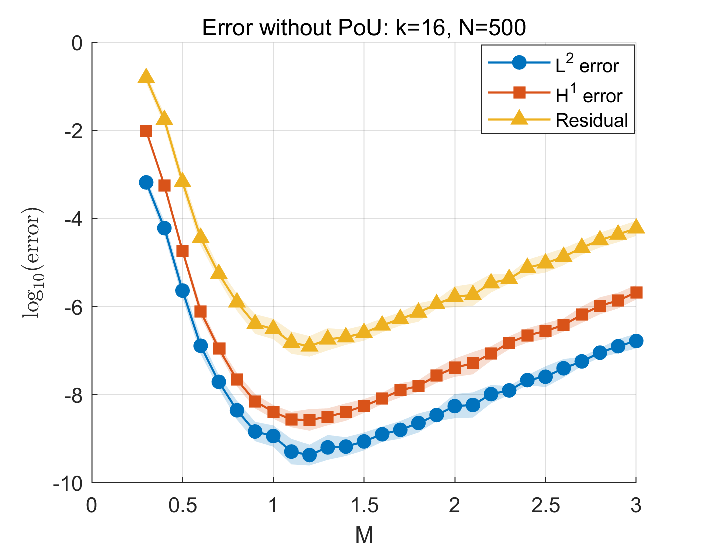}
		\\(a)
	\end{minipage}
	\hfill
	\begin{minipage}{0.49\textwidth}
		\centering
		\includegraphics[width=\linewidth]{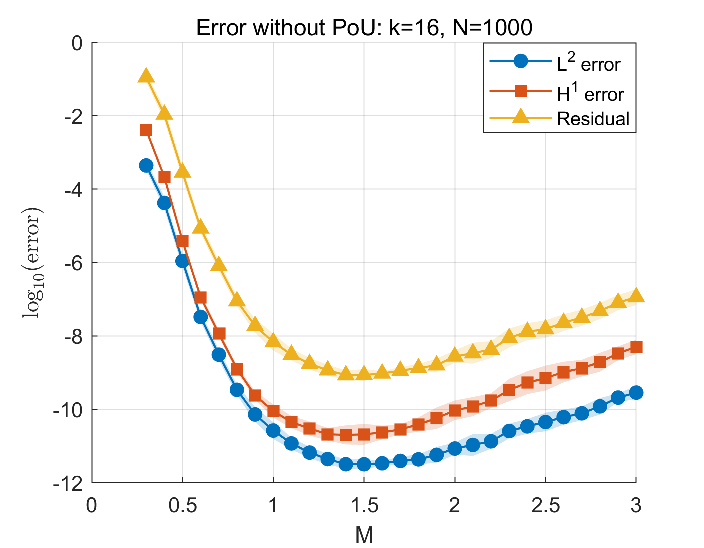}
		\\(b)
	\end{minipage}
	\vspace{0.5cm}
	\centering
	\begin{minipage}{0.49\textwidth}
		\centering
		\includegraphics[width=\linewidth]{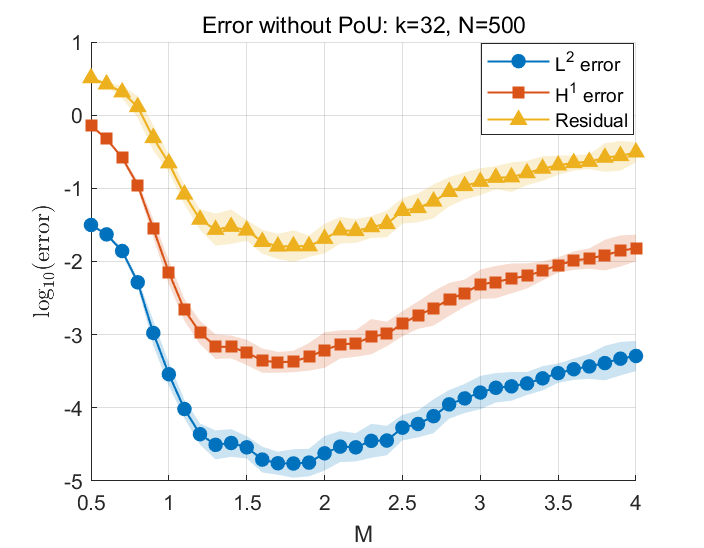}
		\\(c)
	\end{minipage}
	\hfill
	\begin{minipage}{0.49\textwidth}
		\centering
		\includegraphics[width=\linewidth]{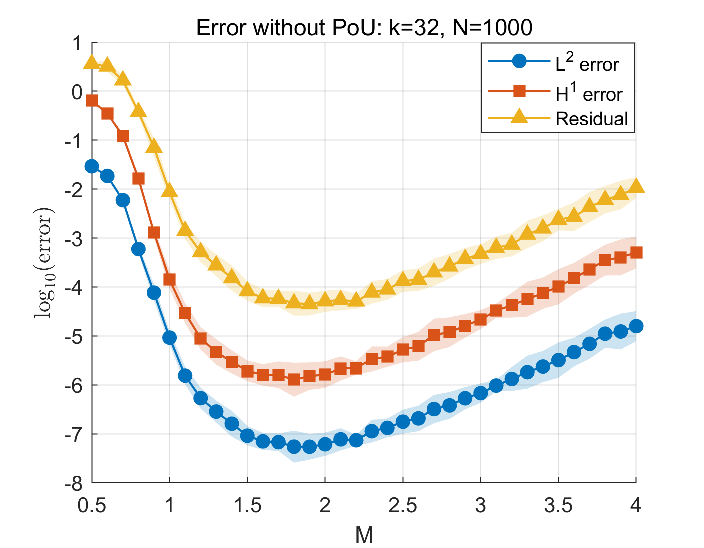}
		\\(d)
	\end{minipage}
	\caption{Error and residual curves as functions of the truncation parameter $M$ for different wavenumbers $k$ and numbers of neurons $N$, without the use of a PoU. Left column: $N = 500$; right column: $N = 1000$. Top row: $k = 16$; bottom row: $k = 32$. The curves show the means over ten independent numerical experiments, and the shaded regions indicate one standard deviation.}\label{Fig1}
\end{figure}

We first examine the influence of the truncation parameter $M$ across two wavenumbers, $k = 16$ and $k = 32$, and two network sizes, $N = 500$ and $N = 1000$. {To account for random initializations, we perform 10 independent numerical experiments using different random seeds. Figure~\ref{Fig1} displays the error and residual curves from these ten trials as functions of the truncation parameter $M$ for different wavenumbers $k$ and numbers of neurons $N$, without using a PoU.} As shown in Fig.~\ref{Fig1}, the RaNN achieves its best approximation accuracy only for an appropriately chosen \(M\); moving away from this favorable range degrades performance. The location of this range depends on both the oscillatory scale of the target function and the number of neurons. For \(k = 16\) and \(N = 500\), the best-performing value is approximately \(M=1.2\) (Fig.~\ref{Fig1}(a)), whereas increasing the wavenumber to \(k = 32\) shifts it to approximately \(M=1.7\) (Fig.~\ref{Fig1}(c)). Increasing the network size to \(N = 1000\) further shifts the favorable value upward, to approximately \(M=1.5\) for \(k = 16\) (Fig.~\ref{Fig1}(b)) and to a correspondingly larger value for \(k = 32\) (Fig.~\ref{Fig1}(d)). These results show that, within this analytic family of solutions, the effective sampling range depends jointly on the spectral content of the target function and the network size.

\begin{figure}[htbp] 
	\centering
	\begin{minipage}{0.49\textwidth}
		\centering
		\includegraphics[width=\linewidth]{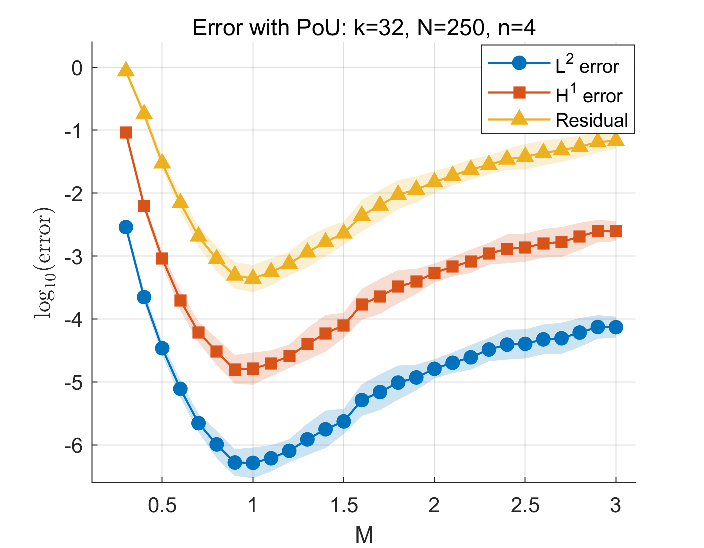}
	\end{minipage}
	\caption{Error and residual curves for different truncation parameters $M$ with a $2 \times 2$ PoU. The curves show the means over ten independent numerical experiments, and the shaded regions indicate one standard deviation.}\label{Fig2}
\end{figure}

We next assess the efficacy of the PoU augmentation strategy proposed in Section~\ref{section 4.2}. Here, RaNN basis functions are generated on a reference element \(\omega_{\text{ref}}\) with a fixed sampling parameter \(M\) and mapped affinely to each physical subdomain. For \(k = 32\), using a \(2 \times 2\) PoU in conjunction with the RaNN defined in \eqref{eq 4.3}, the favorable local truncation parameter is reduced to \(M \approx 1\) (see Fig.~\ref{Fig2}). This observation supports the affine-scaling mechanism described in Section~\ref{section 4.2}: reducing the physical element size enlarges the effective local parameter range. It therefore provides numerical motivation for controlling the local approximation scale through adaptive refinement of the PoU.

\begin{example} \label{example2}
	Consider a two-dimensional Poisson equation posed on the domain $\Omega = [0, 1]^2$. The exact solution is as follows:
	\begin{equation}
		u(x, y) = \exp{\left[ -1000 \left( (x-0.5)^2 + (y-0.5)^2\right) \right]}.
	\end{equation}
	
\end{example}

\begin{figure}[htbp]
	\centering
	\begin{minipage}{0.49\textwidth}
		\centering
		\includegraphics[width=\linewidth]{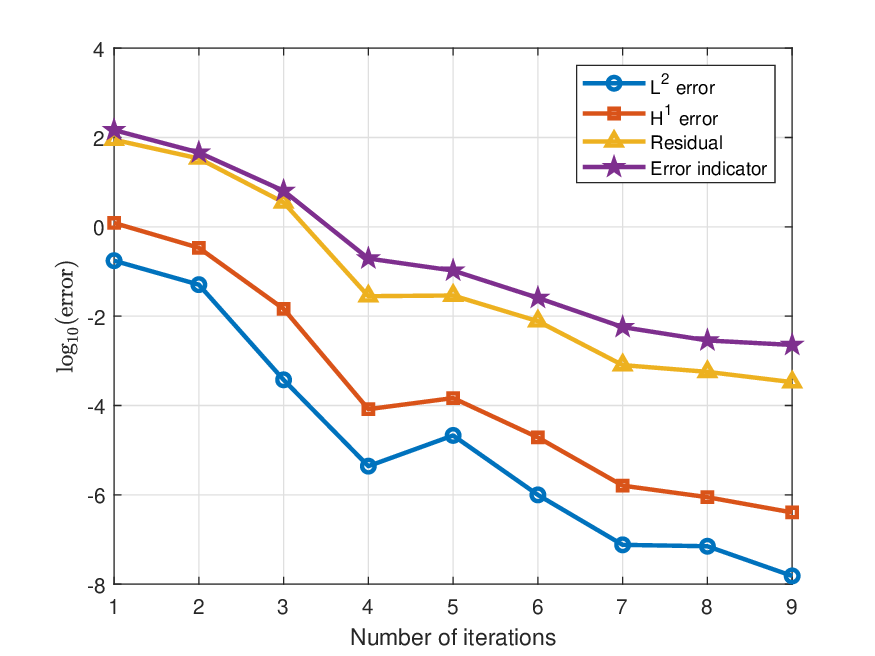}
		\\(a) 
	\end{minipage}
	\hfill
	\begin{minipage}{0.49\textwidth}
		\centering
		\includegraphics[width=\linewidth]{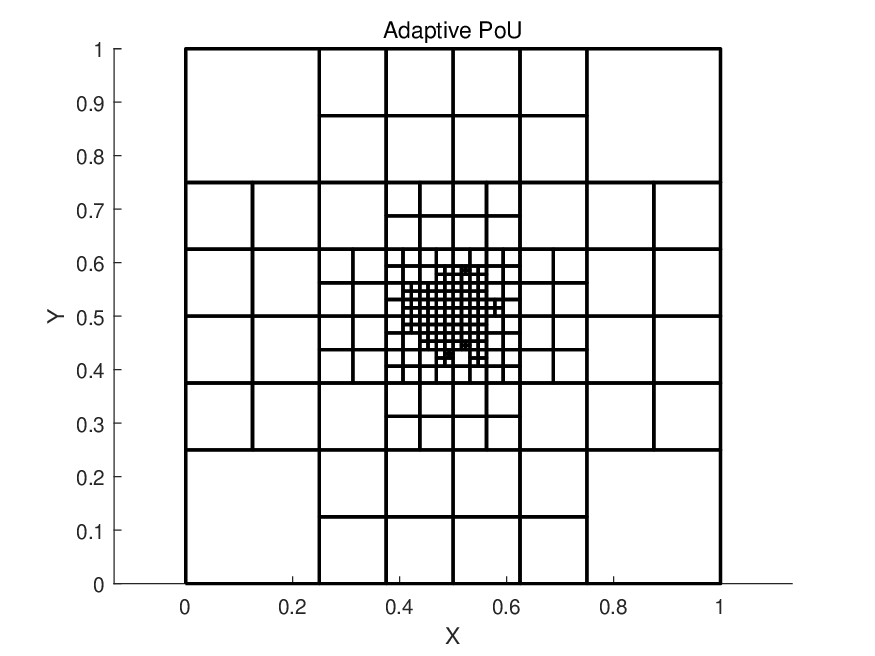}
		\\(b) 
	\end{minipage}
	
	\vspace{0.5cm}
	
	\begin{minipage}{0.49\textwidth}
		\centering
		\includegraphics[width=\linewidth]{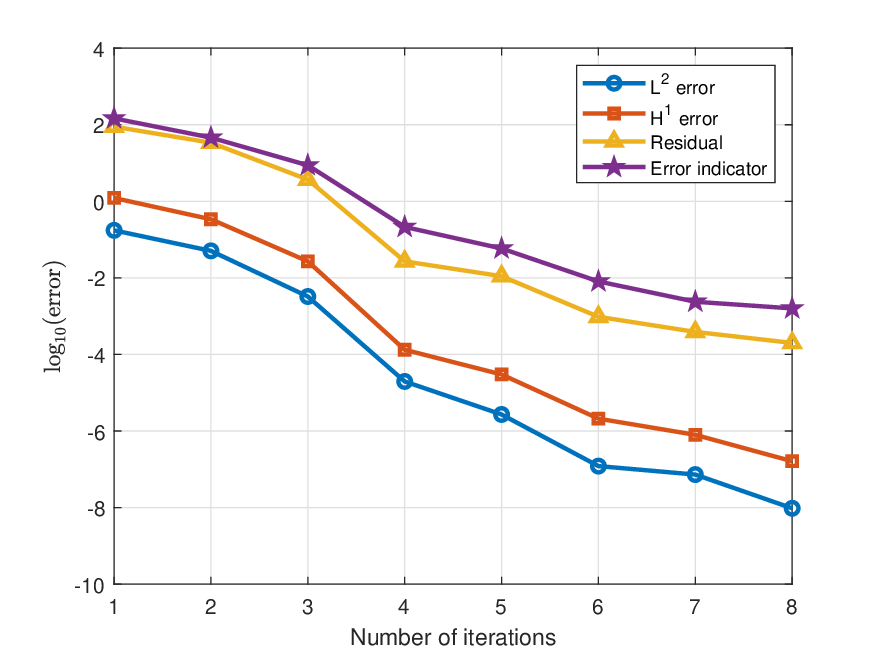}
		\\(c) 
	\end{minipage}
	\hfill
	\begin{minipage}{0.49\textwidth}
		\centering
		\includegraphics[width=\linewidth]{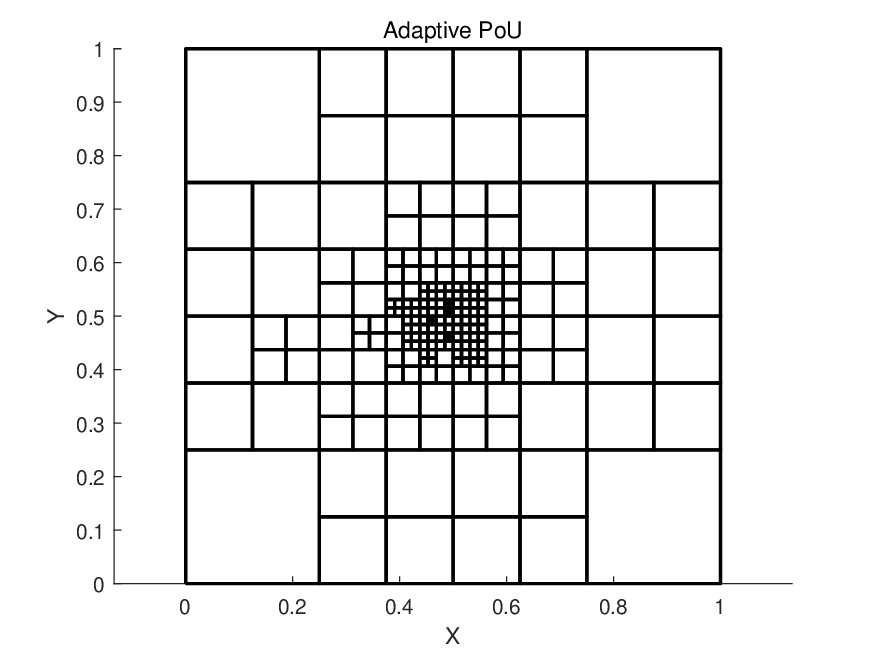}
		\\(d) 
	\end{minipage}
	
	\vspace{0.5cm}
	
	\begin{minipage}{0.49\textwidth}
		\centering
		\includegraphics[width=\linewidth]{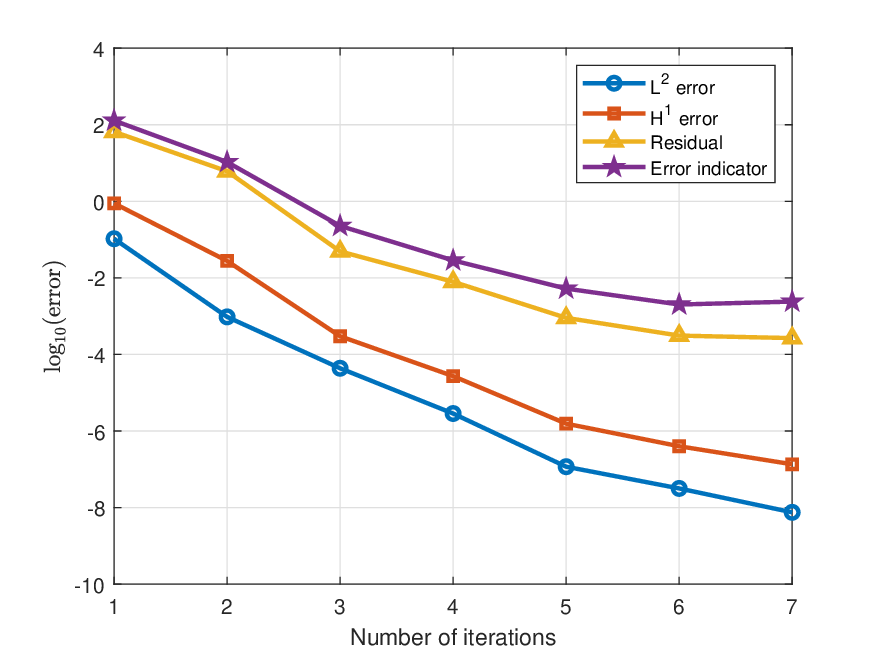}
		\\(e) 
	\end{minipage}
	\hfill
	\begin{minipage}{0.49\textwidth}
		\centering
		\includegraphics[width=\linewidth]{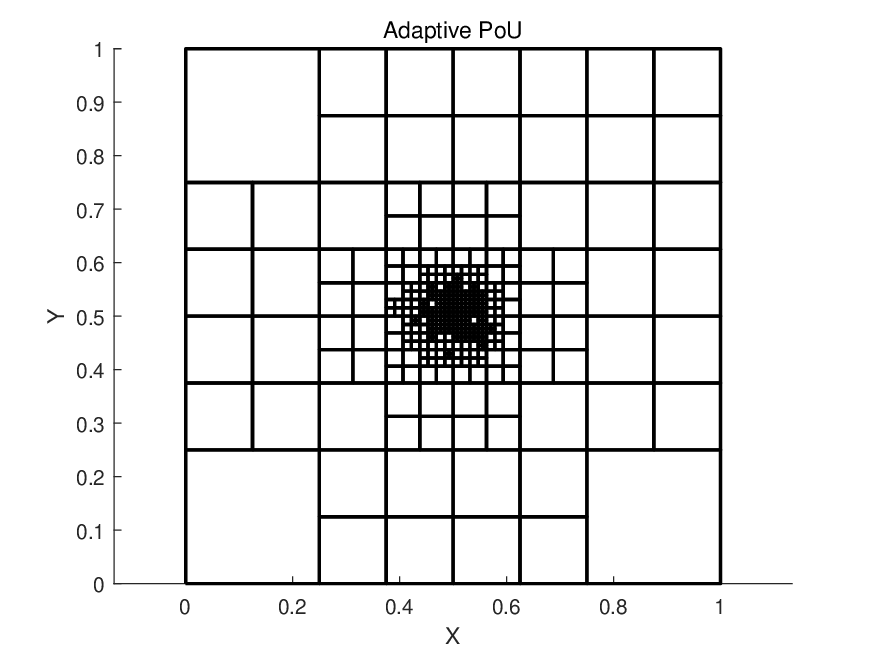}
		\\(f) 
	\end{minipage}
	
	\caption{Convergence curves and adaptive PoUs for the adaptive PIRaNNs. (a), (b): $\theta=0.6$; (c), (d): $\theta=0.7$; (e), (f): $\theta=0.8$.}\label{Fig3}
\end{figure}

In this example, we adopt the Dörfler marking strategy to drive the adaptive refinement. The Poisson equation is solved using adaptive PIRaNNs with a fixed sampling parameter $M=2$ and $N=100$ basis functions on the reference element $\omega_{\mathrm{ref}}$. The initial PoU is chosen as a uniform $4 \times 4$ grid. To assess the stability of the algorithm, we test three different marking parameters: $\theta = 0.6$, $0.7$, and $0.8$. The parameter $\theta$ primarily influences the rate at which the mesh is refined, with larger values leading to more aggressive refinement. 

The numerical results are presented in Fig.~\ref{Fig3}. Subfigures (a) and (b) correspond to $\theta = 0.6$, showing the error convergence curve and the resulting adapted PoU grid, respectively. Subfigures (c) and (d) show the results for $\theta=0.7$, whereas subfigures (e) and (f) show those for $\theta=0.8$. As expected, the adapted grids exhibit strong local refinement near the point $(0.5, 0.5)$, consistent with observations from adaptive finite element methods. This behavior aligns with our earlier analysis: regions where the solution $u$ has large local gradients can be effectively resolved by refining the PoU locally, enabling the RaNN to capture fine-scale features more accurately.

An interesting distinction between the proposed RaNN-based approach and traditional adaptive finite elements lies in the convergence behavior. In Fig.~\ref{Fig3}, we observe that the convergence rates of the $L^2(\Omega)$ and $H^1(\Omega)$ errors are nearly identical. This is consistent with the theoretical analysis in Section~\ref{section 3}. In contrast, standard finite element approximations, which rely on piecewise polynomial bases, typically exhibit a reduction in convergence order when approximating derivatives. The RaNN, employing non-polynomial representations, avoids such order reduction and maintains uniform convergence rates across different Sobolev norms.

{Because the approximation space employed by the PIRaNN method is linear, the adaptive algorithm only requires solving a linear least-squares problem at each iteration. This fundamentally circumvents the computationally expensive nonlinear optimization procedures typically required by standard PINNs. Consequently, the primary computational costs per iteration consist of assembling the global matrix $H$ and vector $T$, solving the resulting linear least-squares system, computing the local error indicators $\eta_j$, and executing the local mesh refinement alongside topological updates. In terms of memory overhead, the primary footprint arises from storing $H$ and $T$, in addition to the memory allocated by the solver during the solution of the least-squares problem~\eqref{eq 2.3}. Crucially, the integration of the PoU strategy ensures that the global coefficient matrix $H$ is highly sparse, which inherently keeps the peak memory consumption strictly manageable. For this example, the wall-clock computational times for the first six iterations, using a refinement parameter $\theta = 0.8$, are detailed in Table~\ref{tab:time_ex1} (in seconds). }

\begin{table}[htbp]
	\centering
	\small
	\caption{Example~\ref{example2}: Number of elements and computational time (in seconds) for each iteration of the adaptive PIRaNN.}
	\label{tab:time_ex1}
	\begin{tabular}{c|cccccc} 
		\toprule
		No. of Iter & No. of elements & Assembly & Solve & Indicator & Refinement & Total \\
		\midrule
		1 & 16 & 0.21 & 0.23 & 0.06 & 0.17 & 0.67 \\
		2 & 28 & 0.11 & 0.39 & 0.02 & 0.10 & 0.62 \\
		3 & 43 & 0.15 & 0.68 & 0.02 & 0.03 & 0.88 \\
		4 & 67 & 0.26 & 1.23 & 0.04 & 0.06 & 1.59 \\
		5 & 109 & 0.45 & 2.14 & 0.05 & 0.12 & 2.76 \\
		6 & 202 & 0.81 & 4.88 & 0.09 & 0.45 & 6.23 \\
		\bottomrule
	\end{tabular}
\end{table}

\begin{figure}[htbp] 
	\centering
	\begin{minipage}{0.75\textwidth}
		\centering
		\includegraphics[width=\linewidth]{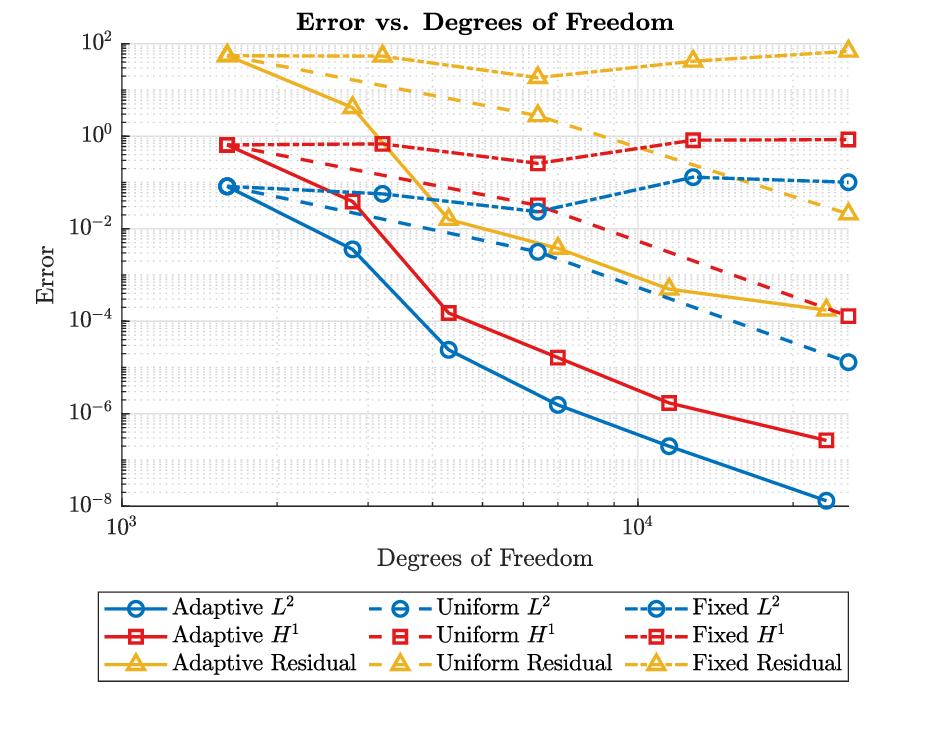}
	\end{minipage}
	\caption{Convergence histories of the errors and residuals with respect to the number of degrees of freedom (DoFs) for the Adaptive PIRaNN, Fixed PIRaNN, and Uniform PIRaNN methods.}
	\label{fig:comparison_curve}
\end{figure}

{To demonstrate the computational efficiency of the adaptive PIRaNN method in resolving problems with steep local gradients or localized singularities, we compare it against two non-adaptive baselines:
	\begin{itemize}
		\item Fixed PIRaNN: Employs a fixed $4\times 4$ PoU mesh while progressively increasing the number of local basis functions per element.
		\item Uniform PIRaNN: Applies uniform mesh refinement across the entire computational domain while keeping the number of basis functions per element constant.
	\end{itemize}
	The decay curves of the errors and residuals with respect to the number of degrees of freedom (DoFs) for all three methods are illustrated in Fig.~\ref{fig:comparison_curve}.
	As can be observed, the adaptive PIRaNN method yields the highest accuracy for the same number of DoFs, which implies that it can effectively reduce computational costs to reach a target accuracy. Notably, for the method with the fixed mesh, the error increases instead of decreasing. This is because an excessive number of local basis functions with poor orthogonality leads to a rapid growth in the condition number of the matrix $H$, ultimately inducing numerical instability. Furthermore, the sampling range of the hidden-layer parameters $\left\lbrace (A_i,B_i)\right\rbrace_{i=1}^N$ is too narrow, causing the basis functions to fail to capture features with large local gradients. This phenomenon is consistent with the theoretical analysis in Section~\ref{section 3} and aligns with Example~\ref{example1}.  
}

\begin{example} \label{example3}
	Consider a three-dimensional Poisson equation posed on $\Omega = [0, 0.5]^3$ whose exact solution is
	\begin{equation}
		u(x,y,z) = \exp{\left[ -1000 \left( (x-0.5)^2 + (y-0.5)^2 + (z-0.5)^2\right) \right]}.
	\end{equation}
\end{example}

In this test, we again employ the Dörfler marking strategy with a refinement parameter $\theta = 0.7$. The number of basis functions generated on the reference element is fixed at $N=200$, and the initial PoU is taken as a uniform $2 \times 2 \times 2$ grid. The numerical results are presented in Fig.~\ref{Fig4}. As shown, the PoU grid becomes highly refined near the point $(0.5, 0.5, 0.5)$. Notably, the observed convergence rate is lower than that observed in the two-dimensional example (Fig.~\ref{Fig3}), suggesting that the performance of uniformly sampled PIRaNNs may be influenced by the dimension $d$. This observation is consistent with the theoretical expectation that higher-dimensional problems pose challenges for approximation, even within the RaNN framework. As established in Theorems~\ref{Theorem 1} and~\ref{Theorem 2}, the exponent in the convergence rate with respect to the number of neurons $N$ scales as $\mathcal{O}(1 / d)$ with respect to the spatial dimension $d$. However, for functions with sufficient smoothness, the adverse effect of dimensionality on the convergence rate is significantly alleviated, as shown in Remark~\ref{remark 2}.

\begin{figure}[htbp] 
	\centering
	\begin{minipage}{0.49\textwidth}
		\centering
		\includegraphics[width=\linewidth]{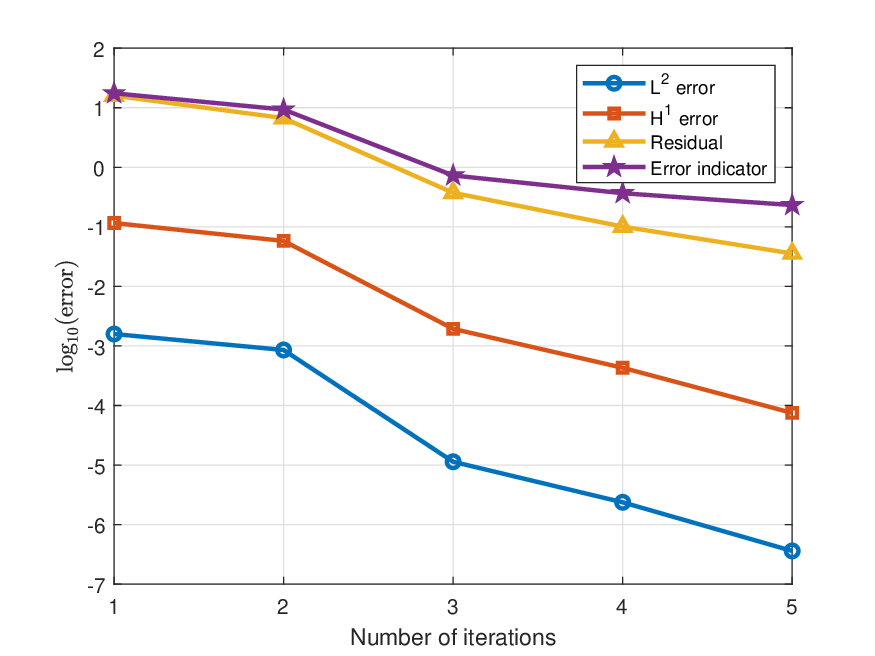}
		\\(a) 
	\end{minipage}
	\hfill
	\begin{minipage}{0.49\textwidth}
		\centering
		\includegraphics[width=\linewidth]{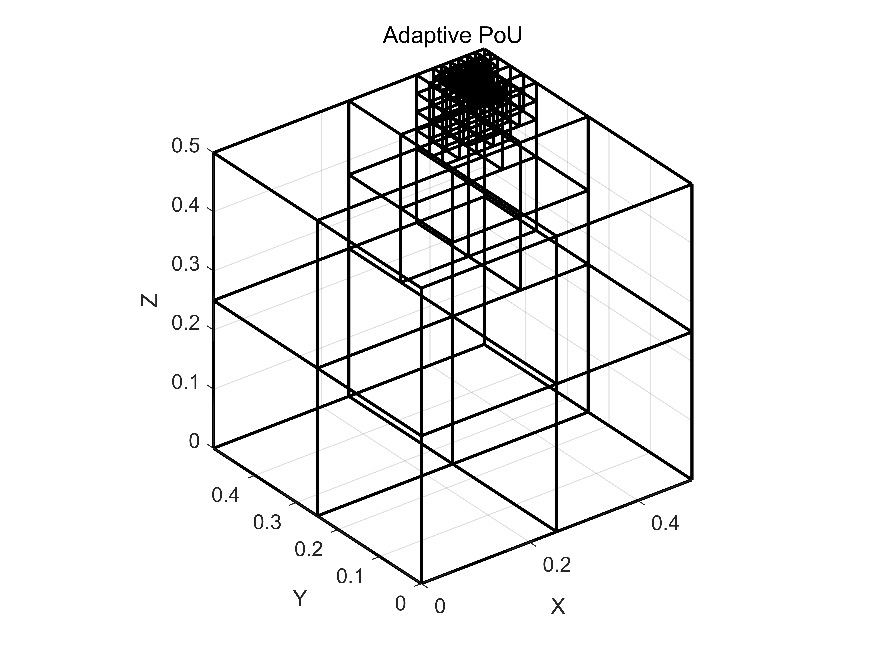}
		\\(b) 
	\end{minipage}
	\caption{Convergence curve and adaptive PoU of Example~\ref{example3} obtained using the adaptive PIRaNN method.}\label{Fig4}
\end{figure}

\begin{example} \label{example4}
	Consider the Poisson equation posed on the L-shaped domain $\Omega$ depicted in Fig.~\ref{Fig5}, with Dirichlet boundary conditions chosen such that the exact solution is given by
	\begin{equation}
		u = r^{2/3} \sin(2\theta / 3).
	\end{equation}
	The primary challenge of this problem stems from the nonconvex corner at the origin, where the solution exhibits singular behavior due to reduced local regularity.
\end{example}

\begin{figure}[htbp]
	\centering
	\begin{minipage}{0.49\textwidth}
		\centering
		\includegraphics[width=\linewidth]{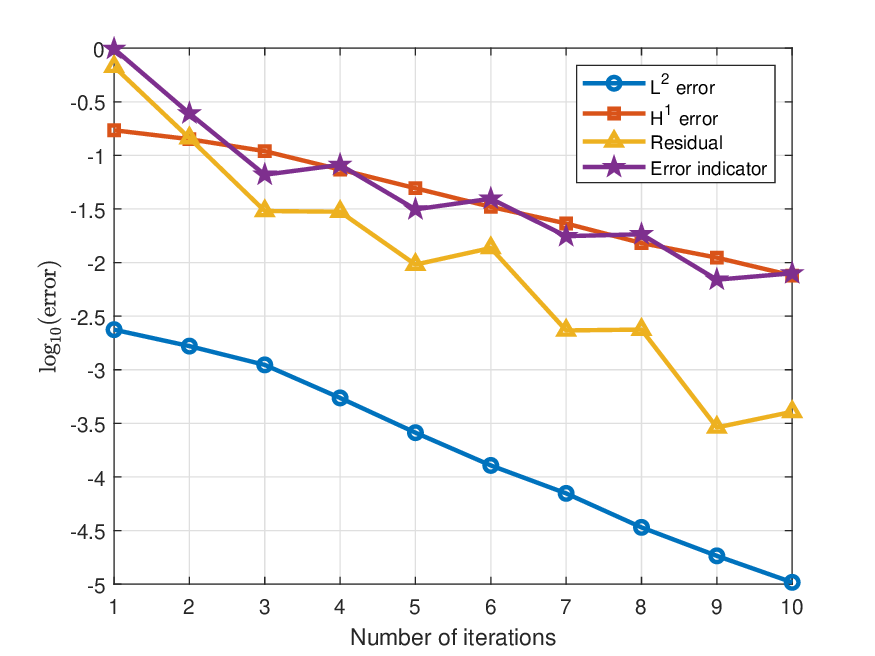}
		\\(a) 
	\end{minipage}
	\hfill
	\begin{minipage}{0.49\textwidth}
		\centering
		\includegraphics[width=\linewidth]{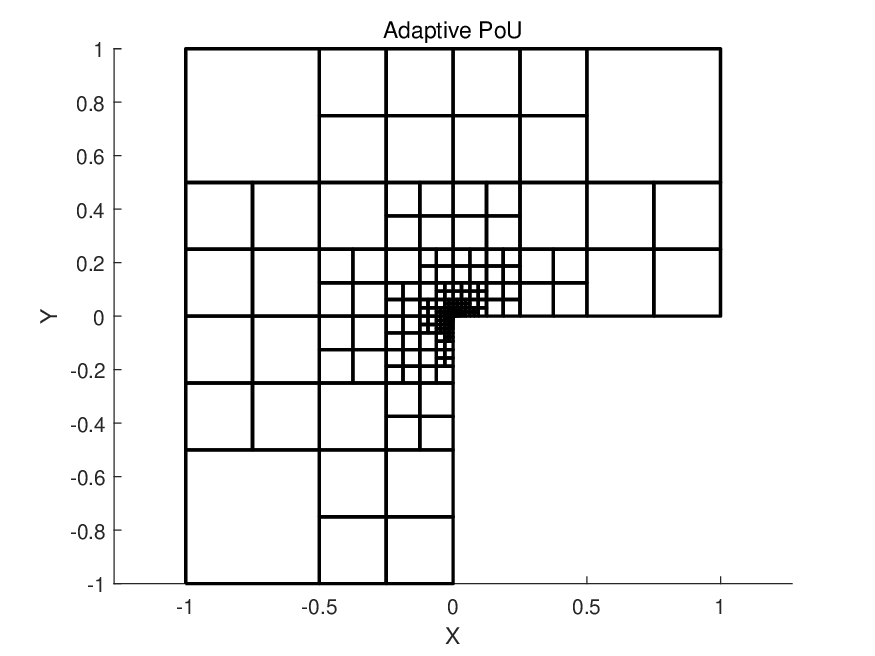}
		\\(b) 
	\end{minipage}
	\caption{Convergence curve and adaptive PoU of Example~\ref{example4} obtained using the adaptive PIRaNN method.}\label{Fig5}
\end{figure}

In this test, we set $\theta = 0.7$, $\lambda_{\partial\Omega} = \lambda_{\Gamma,1} = 100$, $\lambda_{\Gamma,2} = 10$, and the RaNN parameters are the same as in Example~\ref{example2}. The left panel of Fig.~\ref{Fig5} displays the convergence curves of the error, residual, and error indicator, while the right panel shows the adapted PoU. The adaptive mesh is refined near the origin to better capture the local solution behavior, owing to the singularity that degrades the regularity of $u$ in that region. As predicted by the theory, this local refinement effectively reduces the approximation error. However, precisely because of the limited regularity of $u$, the convergence rate of the error in the $H^1(\Omega)$ norm is lower than that in the $L^2(\Omega)$ norm, and consequently does not achieve the same rate as that observed in Example~\ref{example2}. This behavior is consistent with the theoretical results in Section~\ref{section 3}, which indicate that convergence rates in stronger norms are more sensitive to the smoothness of the target function.

\begin{example} \label{example5}
	Consider the following one-dimensional viscous Burgers' equation
	\begin{equation}
		\begin{aligned}
			u_t + u u_x &= \nu u_{xx}, \quad &&\text{in } (0, T) \times D,\\
			u &= 0, \quad &&\text{on } (0, T) \times \partial D, \\
			u(0, x) &= \sin(2\pi x), \quad &&\text{in } D,
		\end{aligned}
	\end{equation}
	where the viscous coefficient $\nu = 0.01 / \pi$, the spatial domain $D = (0, 1)$ and the final time is $T = 1$.
\end{example}

For this nonlinear PDE, we employ a space-time formulation of PIRaNNs, treating the time variable as an additional coordinate on an equal footing with the spatial variables. Hence, the computational domain is $\Omega = (0, T) \times D$ and the RaNN parameters are the same as in Example~\ref{example2}. It has been shown in \cite{zhu2025two} that the $L^2$ error for the Burgers' equation can be bounded by the residuals of the PDE, initial condition, and boundary conditions. Based on this theoretical result, we define a local error indicator $\eta_j$ on each subdomain $\Omega_j$ (with $j=1,\dots,n$) for the RaNN approximation with a PoU:
{
	\begin{equation}
		\begin{aligned}
			\eta_j 
			&= \left\| \left( \widetilde{U_W^{A,B}} \right)_t + \widetilde{U_W^{A,B}} \left( \widetilde{U_W^{A,B}} \right)_x - \nu \left( \widetilde{U_W^{A,B}} \right)_{xx} \right\|_{L^2(\Omega_j)} \\
			&\quad + \sum_{\Gamma \in \partial \Omega_j} \left( \left\| \llbracket \widetilde{U_W^{A,B}} \rrbracket_\Gamma \right\|_{L^{2}(\Gamma)} + \beta_\Gamma \left\| \llbracket \nabla \widetilde{U_W^{A,B}} \rrbracket_\Gamma \right\|_{L^{2}(\Gamma)} \right),
		\end{aligned}
	\end{equation}
	where the jump term is defined according to the type of interface $\Gamma$:
	\begin{equation}
		\llbracket \widetilde{U_W^{A,B}} \rrbracket_\Gamma := 
		\left\lbrace 
		\begin{aligned}
			& \widetilde{U_W^{A,B}}(0, x) - \sin(2\pi x), \quad && \Gamma \subset D \text{ (initial condition)},\\
			& \widetilde{U_W^{A,B}}(t, x), \quad && \Gamma \subset [0, T] \times \partial D \text{ (boundary condition)}, \\
			&\left[ \widetilde{U_W^{A,B}} \right]_\Gamma, \quad && \Gamma \in \mathcal{E}^I \text{ (interior interface)}.
		\end{aligned}
		\right. 
	\end{equation}
	The penalty parameter $\beta_\Gamma$ is set as
	\begin{equation}
		\beta_\Gamma = 
		\left\lbrace 
		\begin{aligned}
			& \mathrm{diam}(\Omega_j), \quad && \Gamma \in \mathcal{E}^I,\\
			&0, \quad && \text{otherwise}.
		\end{aligned}
		\right. 
	\end{equation}
}
To generate a high-fidelity reference solution for the Burgers' equation, we employ the Chebfun package \cite{driscoll2014chebfun} with a Fourier spectral discretization in space and a fourth-order exponential time-differencing scheme \cite{cox2002exponential} with a time step of $10^{-4}$. Temporal snapshots of the solution are saved every $\Delta t = 0.01$, yielding a total of 101 snapshots. The reference solution is evaluated on a $201 \times 101$ spatio-temporal grid, providing a sufficiently accurate baseline for error assessment.

The numerical results are illustrated in Figures~\ref{Fig6} and~\ref{Fig7}. The adaptive parameter is set to $\theta = 0.6$. As shown in Fig.~\ref{Fig6}(a), both the $L^2$ error and the residual of the numerical solution decrease steadily throughout the adaptive iterations, confirming the effectiveness of the adaptive refinement strategy. The final adapted mesh obtained from this process is presented in Fig.~\ref{Fig6}(b). It is evident that the mesh is refined preferentially in the vicinity of the developing shock, enabling the RaNN to capture the sharp front with improved accuracy. Fig.~\ref{Fig7} displays the numerical solution and the corresponding absolute error at selected iterations. Initially, the coarse mesh is insufficient for the RaNN to resolve the shock, leading to significant errors in the shock region. As the adaptive process proceeds and the mesh is locally refined, the shock is captured with increasing precision, and the overall error is substantially reduced. These results demonstrate the capability of the adaptive PIRaNN framework to automatically detect and resolve localized solution features in nonlinear, time-dependent problems.

\begin{figure}[htbp] 
	\centering
	\begin{minipage}{0.49\textwidth}
		\centering
		\includegraphics[width=\linewidth]{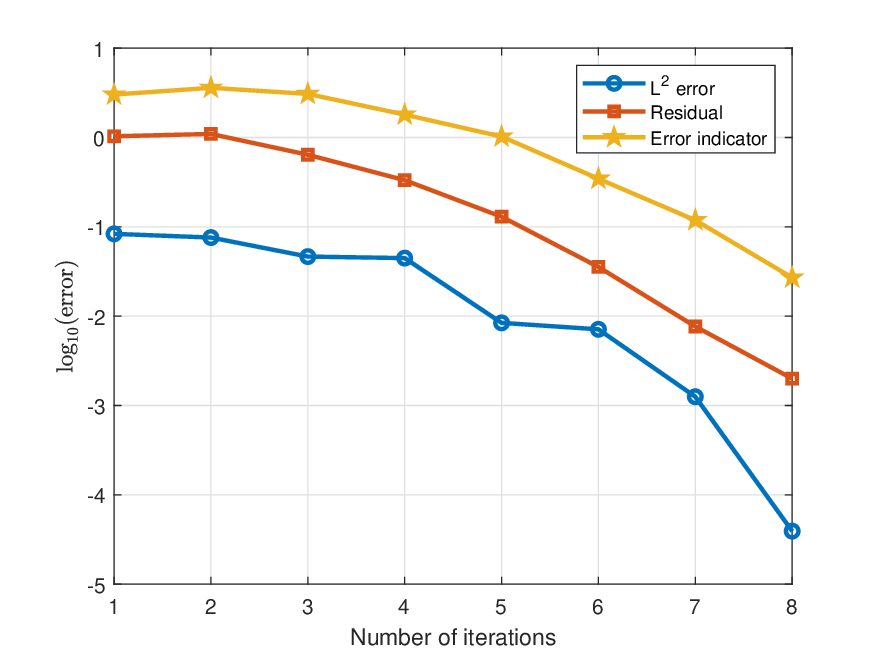}
		\\(a) 
	\end{minipage}
	\hfill
	\begin{minipage}{0.49\textwidth}
		\centering
		\includegraphics[width=\linewidth]{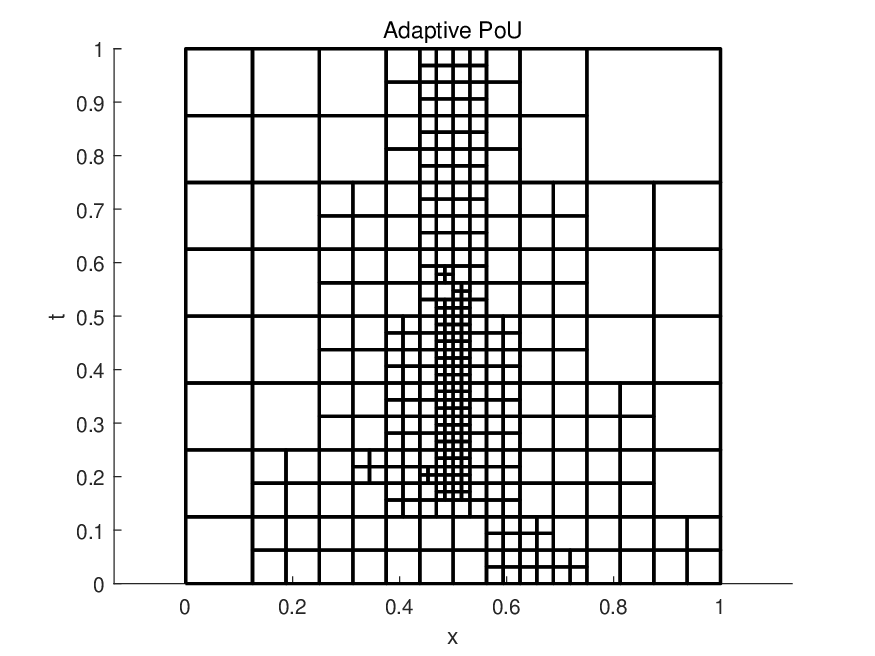}
		\\(b) 
	\end{minipage}
	\caption{Convergence curve and adaptive PoU of Example~\ref{example5} obtained using the adaptive PIRaNN method.}\label{Fig6}
\end{figure}

\begin{figure}[htbp] 
	\centering
	\begin{minipage}{0.12\textwidth}
		\quad
	\end{minipage}
	\centering
	\begin{minipage}{0.28\textwidth}
		\centering
		\small{Step 1.}
	\end{minipage}
	\hfill
	\begin{minipage}{0.28\textwidth}
		\centering
		\small{Step 5.}
	\end{minipage}
	\hfill
	\begin{minipage}{0.28\textwidth}
		\centering
		\small{Step 8.}
	\end{minipage}
	
	\vspace{0.1 cm}
	\centering
	\begin{minipage}{0.12\textwidth}
		\centering
		\small{Numerical solution}
	\end{minipage}
	\centering
	\begin{minipage}{0.28\textwidth}
		\centering
		\includegraphics[width=\linewidth]{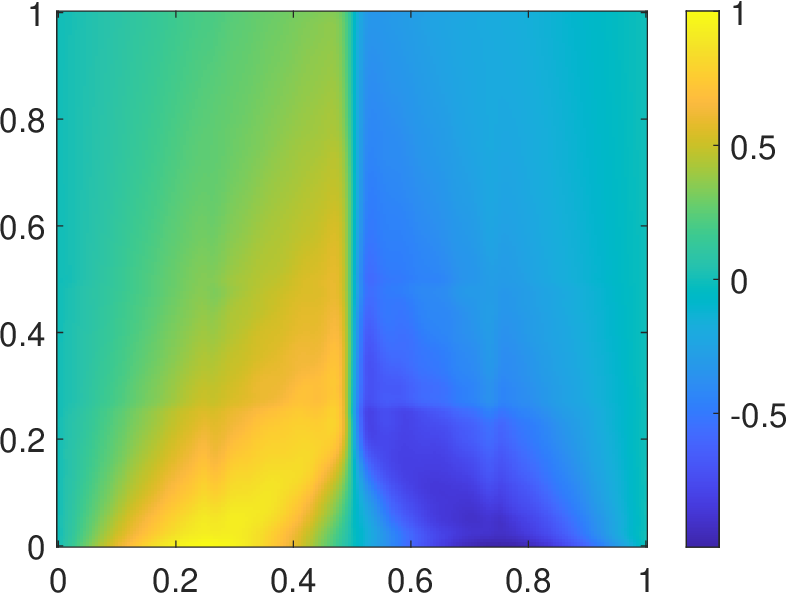}
	\end{minipage}
	\hfill
	\begin{minipage}{0.28\textwidth}
		\centering
		\includegraphics[width=\linewidth]{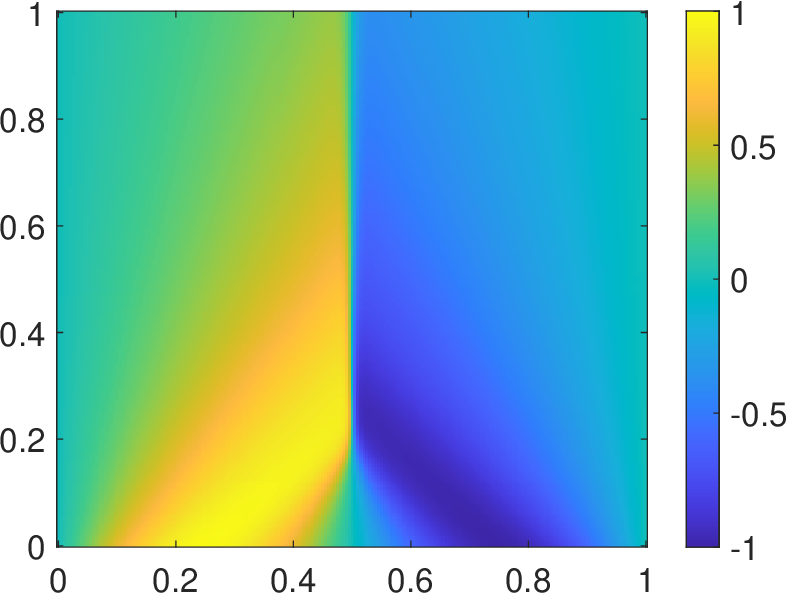}
	\end{minipage}
	\hfill
	\begin{minipage}{0.28\textwidth}
		\centering
		\includegraphics[width=\linewidth]{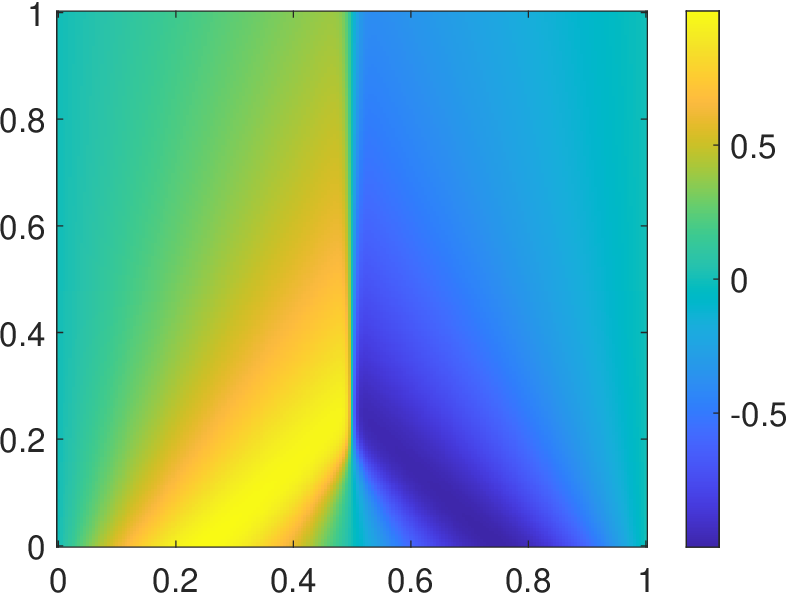}
	\end{minipage}
	
	\vspace{0.5 cm}
	
	\centering
	\begin{minipage}{0.12\textwidth}
		\centering
		\small{Absolute error}
	\end{minipage}
	\centering
	\begin{minipage}{0.28\textwidth}
		\centering
		\includegraphics[width=\linewidth]{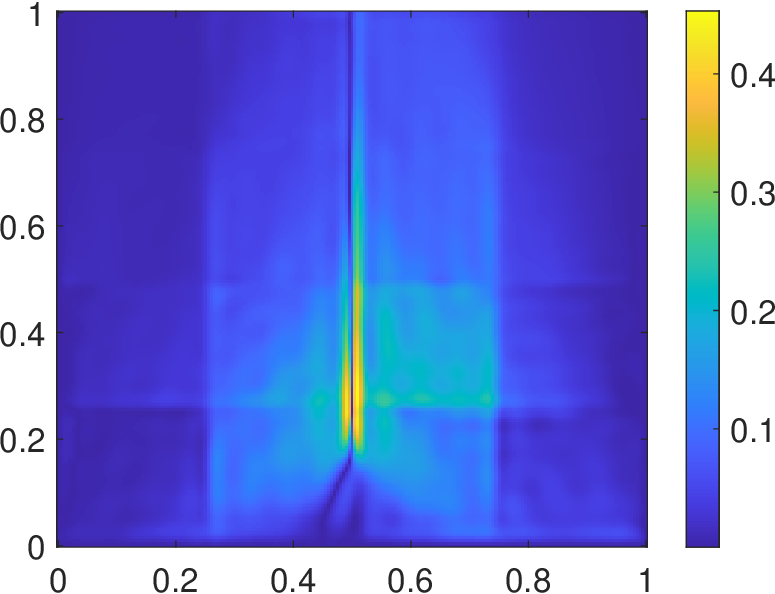}
	\end{minipage}
	\hfill
	\begin{minipage}{0.28\textwidth}
		\centering
		\includegraphics[width=\linewidth]{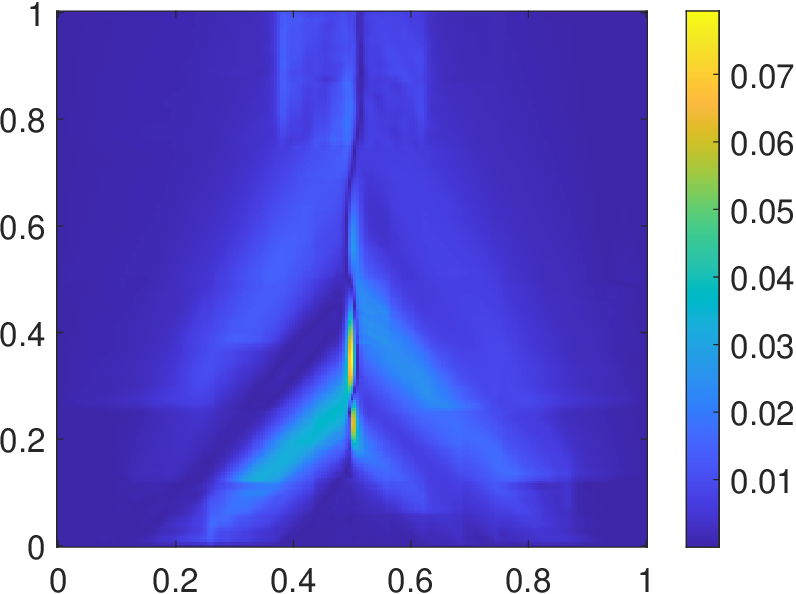}
	\end{minipage}
	\hfill
	\begin{minipage}{0.28\textwidth}
		\centering
		\includegraphics[width=\linewidth]{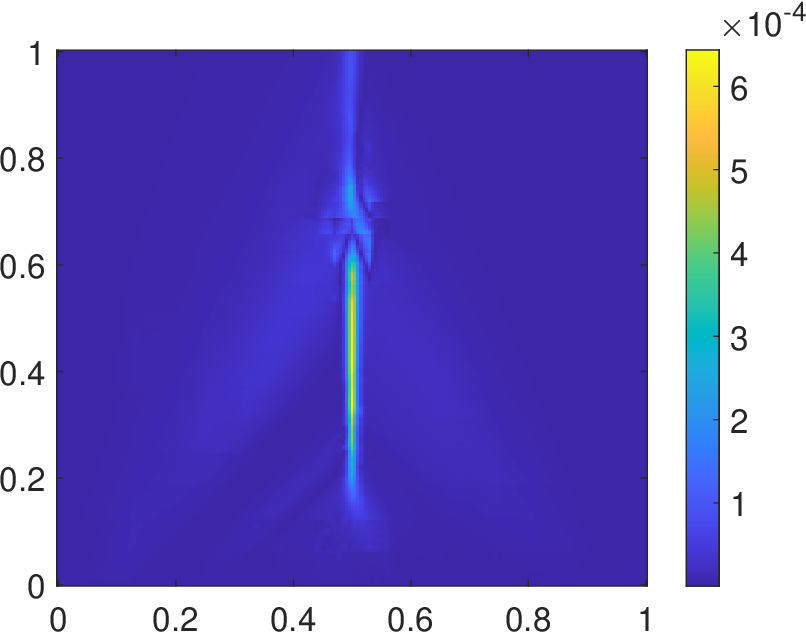}
	\end{minipage}
	\caption{Numerical results obtained using the adaptive PIRaNN method for Example~\ref{example5}. The top row shows the numerical solution $\widetilde{U_W^{A,B}}$, and the bottom row shows the absolute error between the numerical and reference solutions.}\label{Fig7}
\end{figure}

\section{Conclusion} \label{section 6}
In this paper, we have developed a comprehensive theoretical and algorithmic framework for solving PDEs using RaNNs. We introduced a generalized Barron spectral space $\mathcal{B}_s^{k,p}(\Omega)$ to characterize functions that can be efficiently approximated by RaNNs whose hidden-layer parameters are uniformly sampled from a bounded set. For functions in this space, we established explicit convergence rates in Sobolev norms, revealing a fundamental relationship between the required sampling range and the smoothness of the target function. Specifically, we showed that less smooth functions necessitate a larger sampling range to achieve optimal approximation, a result that provides theoretical guidance for parameter selection in practice.

Motivated by this insight, we integrated a PoU with RaNNs to develop an adaptive PIRaNN method. By linking the parameter sampling range to the local element size via affine mappings, the PoU framework effectively translates the challenge of generating appropriate basis functions into the more tractable task of designing a solution-adaptive partition. We further incorporated a posteriori error estimates and the Dörfler marking strategy to drive adaptive refinement, enabling the network to automatically concentrate computational resources in regions where the solution exhibits limited regularity.

A series of numerical experiments validated both the theoretical analysis and the practical effectiveness of the proposed approach. The results confirmed that the convergence rate of RaNNs depends critically on the sampling range relative to the smoothness of the solution, and that the adaptive PIRaNN method successfully captures localized features such as singularities and shocks. Extensions to the viscous Burgers' equation further demonstrated the applicability of the framework to nonlinear, time-dependent problems.

There are several directions for future work. {Currently, our research is largely confined to the uniform sampling of parameters; future studies should explore alternative sampling distributions and more complex sampling strategies.} {This includes establishing the corresponding approximation theorems and examining whether they can effectively improve the approximation rate of RaNN.} Additionally, the theoretical analysis could be broadened to encompass deeper architectures and more general activation functions. {Another key direction is to refine the adaptive algorithmic strategies and establish rigorous convergence theorems for them. Ultimately, we intend to tackle high-dimensional and more complex PDEs.}

\bibliographystyle{siamplain}
\bibliography{references}

@article{xu2020finite,
	title={The finite neuron method and convergence analysis},
	author={Xu, Jinchao},
	journal={Communications in Computational Physics},
	volume={28},
	number={5},
	pages={1707--1745},
	year={2020},
	publisher={Global Science Press}
}

@article{siegel2020approximation,
	title={Approximation rates for neural networks with general activation functions},
	author={Siegel, Jonathan W and Xu, Jinchao},
	journal={Neural Networks},
	volume={128},
	pages={313--321},
	year={2020},
	publisher={Elsevier}
}

@article{huang2006extreme,
	title={Extreme learning machine: theory and applications},
	author={Huang, Guang-Bin and Zhu, Qin-Yu and Siew, Chee-Kheong},
	journal={Neurocomputing},
	volume={70},
	number={1-3},
	pages={489--501},
	year={2006},
	publisher={Elsevier}
}

@article{raissi2019physics,
	title={Physics-informed neural networks: A deep learning framework for solving forward and inverse problems involving nonlinear partial differential equations},
	author={Raissi, Maziar and Perdikaris, Paris and Karniadakis, George E},
	journal={Journal of Computational physics},
	volume={378},
	pages={686--707},
	year={2019},
	publisher={Elsevier}
}

@article{chen2022bridging,
	title={Bridging traditional and machine learning-based algorithms for solving PDEs: the random feature method},
	author={Chen, Jingrun and Chi, Xurong and E, Weinan and Yang, Zhouwang},
	journal={Journal of Machine Learning},
	volume={1},
	number={3},
	pages={268--298},
	year={2022}
}

@article{dong2021local,
	title={Local extreme learning machines and domain decomposition for solving linear and nonlinear partial differential equations},
	author={Dong, Suchuan and Li, Zongwei},
	journal={Computer Methods in Applied Mechanics and Engineering},
	volume={387},
	pages={114129},
	year={2021},
	publisher={Elsevier}
}

@article{barron2002universal,
	title={Universal approximation bounds for superpositions of a sigmoidal function},
	author={Barron, Andrew R},
	journal={IEEE Transactions on Information theory},
	volume={39},
	number={3},
	pages={930--945},
	year={2002},
	publisher={IEEE}
}

@article{klusowski2016risk,
	title={Risk bounds for high-dimensional ridge function combinations including neural networks},
	author={Klusowski, Jason M and Barron, Andrew R},
	journal={arXiv preprint arXiv:1607.01434},
	year={2016}
}

@article{siegel2022high,
	title={High-order approximation rates for shallow neural networks with cosine and ReLUk activation functions},
	author={Siegel, Jonathan W and Xu, Jinchao},
	journal={Applied and Computational Harmonic Analysis},
	volume={58},
	pages={1--26},
	year={2022},
	publisher={Elsevier}
}

@book{folland1999real,
	title={Real analysis: modern techniques and their applications},
	author={Folland, Gerald B},
	year={1999},
	publisher={John Wiley \& Sons}
}

@book{adams2003sobolev,
	title={Sobolev spaces},
	author={Adams, Robert A and Fournier, John JF},
	volume={140},
	year={2003},
	publisher={Elsevier}
}

@book{evans2022partial,
	title={Partial differential equations},
	author={Evans, Lawrence C},
	volume={19},
	year={2022},
	publisher={American Mathematical Society}
}

@article{lu2021learning,
	title={Learning nonlinear operators via DeepONet based on the universal approximation theorem of operators},
	author={Lu, Lu and Jin, Pengzhan and Pang, Guofei and Zhang, Zhongqiang and Karniadakis, George Em},
	journal={Nature Machine Intelligence},
	volume={3},
	number={3},
	pages={218--229},
	year={2021},
	publisher={Nature Publishing Group UK London}
}

@inproceedings{li2020fourier,
	title={Fourier neural operator for parametric partial differential equations},
	author={Li, Zongyi and Kovachki, Nikola and Azizzadenesheli, Kamyar and Liu, Burigede and Bhattacharya, Kaushik and Stuart, Andrew and Anandkumar, Anima},
	booktitle={International Conference on Learning Representations},
	year={2021}
}

@article{guhring2021approximation,
	title={Approximation rates for neural networks with encodable weights in smoothness spaces},
	author={G{\"u}hring, Ingo and Raslan, Mones},
	journal={Neural Networks},
	volume={134},
	pages={107--130},
	year={2021},
	publisher={Elsevier}
}

@article{de2021approximation,
	title={On the approximation of functions by tanh neural networks},
	author={De Ryck, Tim and Lanthaler, Samuel and Mishra, Siddhartha},
	journal={Neural Networks},
	volume={143},
	pages={732--750},
	year={2021},
	publisher={Elsevier}
}

@article{lu2021deep,
	title={Deep network approximation for smooth functions},
	author={Lu, Jianfeng and Shen, Zuowei and Yang, Haizhao and Zhang, Shijun},
	journal={SIAM Journal on Mathematical Analysis},
	volume={53},
	number={5},
	pages={5465--5506},
	year={2021},
	publisher={SIAM}
}

@article{ellacott1994aspects,
	title={Aspects of the numerical analysis of neural networks},
	author={Ellacott, SW},
	journal={Acta Numerica},
	volume={3},
	pages={145--202},
	year={1994},
	publisher={Cambridge University Press}
}

@article{klusowski2016uniform,
	title={Uniform approximation by neural networks activated by first and second order ridge splines},
	author={Klusowski, Jason M and Barron, Andrew R},
	journal={arXiv preprint arXiv:1607.07819},
	year={2016}
}

@article{ma2022barron,
	title={The Barron space and the flow-induced function spaces for neural network models},
	author={E, Weinan and Ma, Chao and Wu, Lei},
	journal={Constructive Approximation},
	volume={55},
	number={1},
	pages={369--406},
	year={2022},
	publisher={Springer}
}

@article{ma2018priori,
	title={A priori estimates of the population risk for two-layer neural networks},
	author={E, Weinan and Ma, Chao and Wu, Lei},
	journal={Communications in Mathematical Sciences},
	volume={17},
	number={5},
	pages={1407--1425},
	year={2019},
	publisher={International Press of Boston}
}

@article{siegel2024sharp,
	title={Sharp bounds on the approximation rates, metric entropy, and n-widths of shallow neural networks},
	author={Siegel, Jonathan W and Xu, Jinchao},
	journal={Foundations of Computational Mathematics},
	volume={24},
	number={2},
	pages={481--537},
	year={2024},
	publisher={Springer}
}

@article{jagtap2020adaptive,
	title={Adaptive activation functions accelerate convergence in deep and physics-informed neural networks},
	author={Jagtap, Ameya D and Kawaguchi, Kenji and Karniadakis, George Em},
	journal={Journal of Computational Physics},
	volume={404},
	pages={109136},
	year={2020},
	publisher={Elsevier}
}

@article{wang2021understanding,
	title={Understanding and mitigating gradient flow pathologies in physics-informed neural networks},
	author={Wang, Sifan and Teng, Yujun and Perdikaris, Paris},
	journal={SIAM Journal on Scientific Computing},
	volume={43},
	number={5},
	pages={A3055--A3081},
	year={2021},
	publisher={SIAM}
}

@article{wang2026gradient,
	title={Gradient alignment in physics-informed neural networks: A second-order optimization perspective},
	author={Wang, Sifan and Li, Bowen and Perdikaris, Paris and others},
	journal={Advances in Neural Information Processing Systems},
	volume={38},
	pages={168482--168532},
	year={2026}
}

@article{bi2025extended,
	title={Extended Interface Physics-Informed Neural Networks Method for Moving Interface Problems},
	author={Bi, Ran and Deng, Weibing and Zhu, Yameng},
	journal={arXiv preprint arXiv:2508.01463},
	year={2025}
}

@article{dwivedi2020physics,
	title={Physics informed extreme learning machine (pielm)--a rapid method for the numerical solution of partial differential equations},
	author={Dwivedi, Vikas and Srinivasan, Balaji},
	journal={Neurocomputing},
	volume={391},
	pages={96--118},
	year={2020},
	publisher={Elsevier}
}

@article{gonon2023random,
	title={Random feature neural networks learn Black-Scholes type PDEs without curse of dimensionality},
	author={Gonon, Lukas},
	journal={Journal of Machine Learning Research},
	volume={24},
	number={189},
	pages={1--51},
	year={2023}
}

@article{de2025approximation,
	title={Approximation theory and applications of randomized neural networks for solving high-dimensional pdes},
	author={De Ryck, T and Mishra, S and Shang, Y and Wang, F},
	journal={arXiv preprint arXiv:2501.12145},
	year={2025}
}

@article{neufeld2023universal,
	title={Universal approximation property of Banach space-valued random feature models including random neural networks},
	author={Neufeld, Ariel and Schmocker, Philipp},
	journal={The Annals of Applied Probability},
	year={2024},
	publisher={Institute of Mathematical Statistics}
}

@article{wang2022and,
	title={When and why PINNs fail to train: A neural tangent kernel perspective},
	author={Wang, Sifan and Yu, Xinling and Perdikaris, Paris},
	journal={Journal of Computational Physics},
	volume={449},
	pages={110768},
	year={2022},
	publisher={Elsevier}
}

@inproceedings{rathore2024challenges,
	title={Challenges in training pinns: A loss landscape perspective},
	author={Rathore, Pratik and Lei, Weimu and Frangella, Zachary and Lu, Lu and Udell, Madeleine},
	booktitle={Forty-first International Conference on Machine Learning},
	year={2024}
}

@article{hu2022discontinuity,
	title={A discontinuity capturing shallow neural network for elliptic interface problems},
	author={Hu, Wei-Fan and Lin, Te-Sheng and Lai, Ming-Chih},
	journal={Journal of Computational Physics},
	volume={469},
	pages={111576},
	year={2022},
	publisher={Elsevier}
}

@article{dorfler1996convergent,
	title={A convergent adaptive algorithm for Poisson’s equation},
	author={D{\"o}rfler, Willy},
	journal={SIAM Journal on Numerical Analysis},
	volume={33},
	number={3},
	pages={1106--1124},
	year={1996},
	publisher={SIAM}
}

@article{cascon2008quasi,
	title={Quasi-optimal convergence rate for an adaptive finite element method},
	author={Cascon, J Manuel and Kreuzer, Christian and Nochetto, Ricardo H and Siebert, Kunibert G},
	journal={SIAM Journal on Numerical Analysis},
	volume={46},
	number={5},
	pages={2524--2550},
	year={2008},
	publisher={SIAM}
}

@article{karakashian2007convergence,
	title={Convergence of adaptive discontinuous Galerkin approximations of second-order elliptic problems},
	author={Karakashian, Ohannes A and Pascal, Frederic},
	journal={SIAM Journal on Numerical Analysis},
	volume={45},
	number={2},
	pages={641--665},
	year={2007},
	publisher={SIAM}
}

@article{liu2025integral,
	title={Integral Representations of Sobolev Spaces via ReLUk Activation Function and Optimal Error Estimates for Linearized Networks},
	author={Liu, Xinliang and Mao, Tong and Xu, Jinchao},
	journal={arXiv preprint arXiv:2505.00351},
	year={2025}
}

@article{zhu2025two,
	title={A Two-stage Adaptive Lifting PINN Framework for Solving Viscous Approximations to Hyperbolic Conservation Laws},
	author={Zhu, Yameng and Deng, Weibing and Bi, Ran},
	journal={arXiv preprint arXiv:2511.04490},
	year={2025}
}

@misc{driscoll2014chebfun,
	title={Chebfun guide},
	author={Driscoll, Tobin A and Hale, Nicholas and Trefethen, Lloyd N},
	year={2014},
	publisher={Pafnuty Publications, Oxford}
}

@article{cox2002exponential,
	title={Exponential time differencing for stiff systems},
	author={Cox, Steven M and Matthews, Paul C},
	journal={Journal of Computational Physics},
	volume={176},
	number={2},
	pages={430--455},
	year={2002},
	publisher={Elsevier}
}

@article{de2026least,
	title={Least squares with equality constraints extreme learning machines for the resolution of PDEs},
	author={De Falco, Davide Elia and Schiassi, Enrico and Calabr{\`o}, Francesco},
	journal={Journal of Computational Physics},
	volume={547},
	pages={114553},
	year={2026},
	publisher={Elsevier}
}

@article{zhang2024transferable,
	title={Transferable neural networks for partial differential equations},
	author={Zhang, Zezhong and Bao, Feng and Ju, Lili and Zhang, Guannan},
	journal={Journal of Scientific Computing},
	volume={99},
	number={1},
	pages={2},
	year={2024},
	publisher={Springer}
}

\appendix 
\section{Proof of Lemma~\ref{lemma 3}} \label{appendix A}
\begin{proof}
	By the Fubini-Tonelli theorem, we have
	\begin{equation}\label{eq A1}
		\begin{aligned}
			&\mathbb{E} \left[ \left\| \mathbb{E} \left[ X_1 \right] - \dfrac{1}{N} \sum_{i=1}^N X_i \right\|_{L^p(\Omega)}^p \right]
			=\int_\Omega \mathbb{E} \left[ \left| \mathbb{E} \left[ X_1 \right] - \dfrac{1}{N} \sum_{i=1}^N X_i\right|^p\right] \mathrm{d}\mu(x) \\
			&\qquad\qquad\quad=N^{-p} \int_\Omega \mathbb{E} \left[ \left| \sum_{i=1}^N \left(  \mathbb{E} \left[ X_1 \right] - X_i \right) \right|^p \right] \mathrm{d}\mu(x).
		\end{aligned}
	\end{equation}
	Applying the Marcinkiewicz-Zygmund inequality, we obtain
	\begin{equation}\label{ieq A2}
		\mathbb{E} \left[ \left| \sum_{i=1}^N \left(  \mathbb{E} \left[ X_1 \right] - X_i \right) \right|^p \right] 
		\le C_p \mathbb{E}\left[ \left( \sum_{i=1}^N \left| \mathbb{E} \left[ X_1 \right] - X_i \right| ^2 \right)^{p/2} \right].
	\end{equation}
	Consequently, it follows from the Minkowski's inequality in the probability space and the fact that $X_i$, $i=1,\dots,N$, are i.i.d.\ that
	{
		\begin{equation*}
			\begin{aligned}
				\left( \mathbb{E}\left[ \left( \sum_{i=1}^N \left|  \mathbb{E} \left[ X_1 \right] - X_i \right| ^2 \right)^{p/2} \right]\right) ^{2/p}
				&\le \sum_{i=1}^N \left( \mathbb{E}\left[ \left( \left|   \mathbb{E} \left[ X_1 \right] - X_i \right| ^2 \right)^{p/2} \right]\right) ^{2/p}\\
				&= N \left( \mathbb{E}\left[ \left|  \mathbb{E} \left[ X_1 \right] - X_1 \right| ^p \right]\right) ^{2/p},
			\end{aligned}
		\end{equation*}
	}
	which, when combined with \eqref{eq A1} and \eqref{ieq A2}, yields
	\begin{equation} \label{ieq A3}
		\mathbb{E} \left[ \left\| \mathbb{E} \left[ X_1 \right] - \dfrac{1}{N} \mathop{\sum}_{i=1}^N X_i \right\|_{L^p(\Omega)}^p \right] \le \dfrac{ C_p}{N^{p/2}} \left( \int_\Omega \mathbb{E} \left[ \left| \mathbb{E}\left[ X_1\right]  - X_1\right| ^p\right] \mathrm{d}\mu(x) \right). 
	\end{equation}
	Further, via Jensen's inequality we obtain
	\begin{equation*}
		\begin{aligned}
			\mathbb{E} \left[ \left\| \mathbb{E} \left[ X_1 \right] - \dfrac{1}{N} \sum_{i=1}^N X_i \right\|_{L^p(\Omega)} \right]
			&\le \left( \mathbb{E} \left[ \left\| \mathbb{E} \left[ X_1 \right] - \dfrac{1}{N} \sum_{i=1}^N X_i \right\|_{L^p(\Omega)}^p \right]\right)^{1/p} \\
			&\le \dfrac{C_p^{1/p}}{N^{1/2}} \left( \int_\Omega \mathbb{E} \left[ \left| \mathbb{E}\left[ X_1\right]  - X_1\right| ^p\right]  \mathrm{d}\mu(x) \right)^{1/p}.
		\end{aligned}
	\end{equation*}
	This completes the proof.
\end{proof}

\section{Proof of Theorem~\ref{Theorem 2}}\label{appendix B}

\begin{proof}
	From Lemma~\ref{lemma 2}, we have
	\begin{equation*}
		\left\| \partial_x^\alpha \left( u - u^M \right) \right\|_{L^p(\Omega)}
		\lesssim M^{-\eta} \|u\|_{\mathcal{B}_{|\alpha|+m}^{1,p}(\Omega)}.
	\end{equation*}
	Denote $S_M = G_\xi(M) \times G_s(M)$ and partition $S_M$ into mutually disjoint convex (or uniformly
	shape-regular Lipschitz) subdomains $\{S_i\}_{i=1}^N$ satisfying
	\[
	|S_i|\simeq \frac{|S_M|}{N},
	\qquad
	\operatorname{diam}(S_i)
	\lesssim \frac{M}{N^{1/(d+1)}}.
	\]
	Using this partition, we can rewrite the truncated function $u^M$ from \eqref{eq 3.4} as
	\begin{equation} \label{eq B1}
		\begin{aligned}
			u^M(x) 
			&= \sum_{i=1}^N \int_{S_i} \frac{|S_i|}{|S_i|\,|S_M|} f(\xi, s) \sigma(\xi \cdot x + s) \,\mathrm{d}\xi \,\mathrm{d}s \\
			&= \sum_{i=1}^N \widetilde{\pi}(S_i) \, \mathbb{E}_{\widetilde{\pi}_i} \left[ f(\xi, s) \sigma(\xi \cdot x + s) \right],
		\end{aligned}
	\end{equation}
	where $\widetilde{\pi}_i$ denotes the uniform probability measure on $S_i$, i.e.,
	\[
	\mathrm{d} \widetilde{\pi}_i(\xi, s) = \pi_i(\xi, s) \,\mathrm{d}\xi \,\mathrm{d}s = \frac{1}{|S_i|} \,\mathrm{d}\xi \,\mathrm{d}s.
	\]
	Let $c_i = \lceil \widetilde{\pi}(S_i) N \rceil$ be the number of samples drawn from $S_i$. Then
	\[
	\sum_{i=1}^{N} c_i = \mathcal{O}(N) \quad \text{and} \quad c_i \ge 1
	\]
	since $\sum_i \widetilde{\pi}(S_i) = 1$ and the partition $\{ S_i \}_{i=1}^N$ is quasi-uniform.
	We now construct a RaNN function based on this stratified sampling strategy:
	\begin{equation*}
		\begin{aligned}
			\widehat{U_W^{A,B}}(x) 
			&= \sum_{i=1}^{N} \widetilde{\pi}(S_i) \frac{1}{c_i} \sum_{j=1}^{c_i} f(A_{i,j}, B_{i,j}) \, \sigma(A_{i,j} \cdot x + B_{i,j}) \\
			&= \sum_{i=1}^{N} \sum_{j=1}^{c_i} \frac{\widetilde{\pi}(S_i)}{c_i} f(A_{i,j}, B_{i,j}) \, \sigma(A_{i,j} \cdot x + B_{i,j}) \\
			&=: \sum_{i=1}^{N} \sum_{j=1}^{c_i} W_{i,j} \, \sigma(A_{i,j}\cdot x + B_{i,j}),
		\end{aligned}
	\end{equation*}
	where, for each $S_i$, the parameters $\{ (A_{i,j}, B_{i,j}) \}_{j=1}^{c_i}$ are independently and identically distributed according to the uniform distribution over $S_i$. Define the corresponding random variables
	$
	X_{i, j}(x) = f(A_{i,j}, B_{i,j}) \, \sigma(A_{i,j} \cdot x + B_{i,j}).
	$
	The samples associated with different strata are generated independently; hence, the corresponding random fields are mutually independent. For any multi-index $\alpha$ with $|\alpha| \le k$, using \eqref{eq B1} and \eqref{ieq A3} we obtain
	{
		\begin{equation} \label{ieq B2}
			\begin{aligned}
				&\mathbb{E} \left[ \left\| \partial_x^\alpha u^M - \partial_x^\alpha \widehat{U_{W}^{A,B}} \right\|_{L^p(\Omega)}^p \right] \\
				&\lesssim\int_\Omega \left[\sum_{i=1}^{N} \dfrac{\widetilde{\pi}(S_i)^2}{c_i} \left( \mathbb{E}_{\widetilde{\pi}_i} \left[ \left| \mathbb{E}_{\widetilde{\pi}_i}(\partial_x^\alpha X_{i,1}) - \partial_x^\alpha X_{i,1} \right|^p \right]\right)^{2/p}  \right]^{p/2}  \\
				&\lesssim N^{p/2-1}\sum_{i=1}^{N} \widetilde{\pi}(S_i)^p \left( \frac{1}{c_i} \right)^{\frac{p}{2}} \biggl( \int_\Omega \mathbb{E}_{\widetilde{\pi}_i} \left[ \left| \mathbb{E}_{\widetilde{\pi}_i}(\partial_x^\alpha X_{i,1}) - \partial_x^\alpha X_{i,1} \right|^p \right] \,\mathrm{d}x \biggr).
			\end{aligned}
		\end{equation}
	}
	Here, the last step utilizes the algebraic inequality
	$$
	\left( \sum_{i=1}^N a_i \right)^{p/2} \le N^{p/2-1} \sum_{i=1}^N a_i^{p/2}, \quad p \ge 2.
	$$
	
	Define $
	\mathbb{V}_{i}^{\alpha, p} := \mathbb{E}_{\widetilde{\pi}_i} \left[ \left| \mathbb{E}_{\widetilde{\pi}_i}(\partial_x^\alpha X_{i,1}) - \partial_x^\alpha X_{i,1} \right|^p \right].
	$
	By the local Poincaré inequality \cite{evans2022partial}, we have
	\begin{equation*}
		\begin{aligned}
			\mathbb{V}_{i}^{\alpha, p}
			&= \frac{1}{|S_i|} \int_{S_i} \left| \partial_x^\alpha X_{i,1} - \frac{1}{|S_i|} \int_{S_i} \partial_x^\alpha X_{i,1} \,\mathrm{d}\xi \,\mathrm{d}s \right|^p \,\mathrm{d}\xi \,\mathrm{d}s \\
			&\lesssim \frac{1}{|S_i|} \left( \frac{M}{N^{\frac{1}{d+1}}} \right)^p \int_{S_i} \left| \partial_{(\xi, s)} \partial_x^\alpha X_{i,1} \right|^p \,\mathrm{d}\xi \,\mathrm{d}s.
		\end{aligned}
	\end{equation*}
	Consequently,
	\begin{equation*}
		\begin{aligned}
			&\mathbb{E} \left[ \left\| \partial_x^\alpha u^M - \partial_x^\alpha \widehat{U_{W}^{A,B}} \right\|_{L^p(\Omega)}^p \right] \\
			&\lesssim N^{p/2-1}\sum_{i=1}^N \widetilde{\pi}(S_i)^p \left( \frac{1}{c_i} \right)^{\frac{p}{2}} \frac{1}{|S_i|} \left( \frac{M}{N^{\frac{1}{d+1}}} \right)^p \int_\Omega \int_{S_i} \left| \partial_{(\xi, s)} \partial_x^\alpha X_{i,1} \right|^p \,\mathrm{d}\xi \,\mathrm{d}s \,\mathrm{d}x \\
			&\lesssim {\frac{M^{p - (d+1)}}{N^{\frac{p}{2} + \frac{p}{d+1}}}} \int_\Omega \int_{S_M} \left| \partial_{(\xi, s)} \partial_x^\alpha \left[ f(\xi, s) \sigma(\xi \cdot x + s) \right] \right|^p \,\mathrm{d}\xi \,\mathrm{d}s \,\mathrm{d}x.
		\end{aligned}
	\end{equation*}
	From the definition of $f(\xi, s)$ in Theorem~\ref{Theorem 1}, we have
	\begin{equation*}
		\begin{aligned}
			&\left| \partial_{(\xi, s)} \partial_x^\alpha \left[ f(\xi, s) \sigma(\xi \cdot x + s) \right] \right| \\
			&= \left| \partial_{(\xi, s)} \left[ \left( \prod_{i=1}^d a^{-\alpha_i} (a \xi_i)^{\alpha_i} \right) \sigma^{(|\alpha|)}(\xi \cdot x + s) f(\xi, s) \right] \right| \\
			&\lesssim M^{d+1} \left[ (1 + |a\xi|)^{|\alpha|-1} \left| \sigma^{(|\alpha|)}(\xi \cdot x + s) \right| \, |\widehat{u_e}(a\xi)|\right.  \\
			&\qquad + (1 + |a\xi|)^{|\alpha|} \left| \left( \sigma^{(|\alpha|+1)}(\xi \cdot x + s) + \sigma^{(|\alpha|)}(\xi \cdot x + s)\right) \right| \, |\widehat{u_e}(a\xi)| \\
			&\qquad\left.  + (1 + |a\xi|)^{|\alpha|} \left| \sigma^{(|\alpha|)}(\xi \cdot x + s) \right| \, |\partial_\xi \widehat{u_e}(a\xi)| \right].
		\end{aligned}
	\end{equation*}
	Using Lemma~\ref{lemma 1} and the fact that $1<mp$, we integrate over $S_M$ to obtain
	\begin{equation*}
		\int_{S_M} \left| \partial_{(\xi, s)} \partial_x^\alpha \left[ f(\xi, s) \sigma(\xi \cdot x + s) \right] \right|^p \,\mathrm{d}\xi \,\mathrm{d}s \lesssim M^{p(d+1)} \|u\|_{\mathcal{B}_{|\alpha| + m}^{1, p}(\Omega)}^p.
	\end{equation*}
	Therefore, applying Jensen's inequality yields
	\begin{equation*}
		\begin{aligned}
			\mathbb{E} \left[ \left\| \partial_x^\alpha u^M - \partial_x^\alpha \widehat{U_{W}^{A,B}} \right\|_{L^p(\Omega)} \right] 
			\lesssim \frac{M^{(1-1/p)(d+1)+1}}{{N^{1/2 + 1/(d+1)}}} \|u\|_{\mathcal{B}_{|\alpha|+ m}^{1, p}(\Omega)}.
		\end{aligned}
	\end{equation*}
	Summing over all multi-indices with $|\alpha| \le k$ gives
	\begin{equation*}
		\mathbb{E}\left[ \left\| u^M - \widehat{U_{W}^{A,B}} \right\|_{W^{k,p}(\Omega)} \right] \lesssim \frac{M^{(1-1/p)(d+1)+1}}{{N^{1/2 + 1/(d+1)}}} \|u\|_{\mathcal{B}_{k + m}^{1, p}(\Omega)}.
	\end{equation*}
	Finally, applying the triangle inequality, we obtain
	\begin{equation*}
		\begin{aligned}
			\mathbb{E}\left[ \left\| u - \widehat{U_{W}^{A,B}} \right\|_{W^{k,p}(\Omega)} \right]
			&\le \| u - u^M \|_{W^{k,p}(\Omega)} + \mathbb{E}\left[ \left\| u^M - \widehat{U_{W}^{A,B}} \right\|_{W^{k,p}(\Omega)} \right] \\
			&\lesssim \left( M^{-\eta} + \frac{M^{(1-1/p)(d+1)+1}}{{N^{1/2 + 1/(d+1)}}} \right) \|u\|_{\mathcal{B}_{k + m}^{1, p}(\Omega)}.
		\end{aligned}
	\end{equation*}
	The optimal approximation is achieved by balancing the two terms, i.e., {choosing $
		M \asymp N^{\frac{p(d+3)}{2(d+1)[(p-1)(d+1)+p+p\eta]}},
		$}
	which yields the final estimate \eqref{ieq 3.13} immediately.
\end{proof}	

\end{document}